\documentclass[11pt]{amsart}

\usepackage{amsmath,amssymb,amsthm}
\usepackage{xcolor}
\usepackage{hyperref}

\numberwithin{equation}{section}

\newtheorem{theorem}{Theorem}[section]
\newtheorem{lemma}[theorem]{Lemma}
\newtheorem{corollary}[theorem]{Corollary}

\theoremstyle{definition}
\newtheorem{definition}[theorem]{Definition}

\title[TV distance for Bernoulli products]{TV over Bernoulli products: the small parameter regime}

\author{Ariel Avital}
\email{avitalq@post.bgu.ac.il}

\author{Aryeh Kontorovich}
\email{karyeh@cs.bgu.ac.il}

\author{George Salafatinos}
\email{georgesalafatinos@gmail.com}

\keywords{total variation distance; Bernoulli product measures; small probabilities; Poisson--binomial}
\subjclass[2020]{60E05; 60C05}
\date{} %

\newcommand{\beq}{\begin{eqnarray*}}
\newcommand{\eeq}{\end{eqnarray*}}
\newcommand{\beqn}{\begin{eqnarray}}
\newcommand{\eeqn}{\end{eqnarray}}

\newcommand{\TV}{\operatorname{TV}}

\newcommand{\Ber}{\operatorname{Ber}}

\newcommand{\half}{\frac{1}{2}}

\newcommand{\abs}[1]{\left|#1\right|}

\newcommand{\norm}[1]{\Vert #1 \Vert}
\newcommand{\set}[1]{\left\{ #1 \right\}}

\newcommand{\defref}[1]{Definition~\ref{#1}}

\newcommand{\vct}[1]{\mathbf{#1}}
\newcommand{\onevec}{\mathbf{1}}
\renewcommand{\vec}[1]{\textup{{\bf #1}}}

\newcommand{\pvec}{\vct{p}}
\newcommand{\qvec}{\vct{q}}
\newcommand{\xvec}{\vct{x}}
\newcommand{\yvec}{\vct{y}}
\newcommand{\uvec}{\vct{u}}

\newcommand{\normone}[1]{\left\lVert #1 \right\rVert_1}

\newcommand{\lam}{\lambda}

\begin{document}

\begin{abstract}
We study the total variation distance (TV) between two
$n$-fold Bernoulli product measures
parametrized by
$\vec p=(p_1,\ldots,p_n)$
and
$\vec q=(q_1,\ldots,q_n)$,
respectively, in the \emph{tiny} and \emph{small} regimes.
In the tiny regime, we have $p_i,q_i\lesssim 1/n^2$,
and in the small regime, $p_i,q_i\lesssim 1/n$.
We discover that in the tiny regime,
the TV distance behaves as $\|\vec p-\vec q\|_1$,
while in the small regime, it behaves as
\[
\sum_{i=1}^n
\Big|
p_i\prod_{j\neq i}(1-p_j)
-
q_i\prod_{j\neq i}(1-q_j)
\Big|,
\]
both up to absolute constants.
Along the way we discover some identities of possible independent
interest.
\end{abstract}

\maketitle

\section{Introduction}

For $\pvec,\qvec\in[0,1]^n$, consider the Bernoulli product measures
\[
\Ber(\pvec):=\Ber(p_1)\otimes\cdots\otimes\Ber(p_n),
\qquad
\Ber(\qvec):=\Ber(q_1)\otimes\cdots\otimes\Ber(q_n),
\]
on the Hamming cube $\{0,1\}^n$.
The total variation distance
\[
\TV(\Ber(\pvec),\Ber(\qvec))
=\half\sum_{x\in\{0,1\}^n}\abs{\Ber(\pvec)(x)-\Ber(\qvec)(x)}
\]
is both fundamental and notoriously difficult to compute exactly.
From the algorithmic perspective,
computing $\TV(\Ber(\pvec),\Ber(\qvec))$ exactly
is $\#\mathrm{P}$-hard in general \cite{DBLP:conf/ijcai/0001GMMPV23},
motivating
a series of
efficient approximation schemes for product measures
\cite{FengApproxTV23,Feng24Deterministically,bhattacharyya2024total}.
From the analytic perspective,
TV is often bounded
by more tractable divergences (KL, Hellinger, $\chi^2$, etc.),
whose tensorization properties are well understood;
see \cite{kon25tens} for a recent discussion in the present setting.

This work continues the program initiated in \cite{kon25tens,kontorovich2026tvhomogenization} of
approximating the TV over product measures in terms of readily computable elementary functions; the forthcoming paper \cite{KonAv26} further builds on the results we prove here. 
Our point of departure is 
Theorems 1.1 and 1.2 of
\cite{kon25tens}, which show, respectively, that
\[
\TV(\Ber(\pvec),\Ber(\qvec))\ \gtrsim\ \norm{\pvec-\qvec}_2,
\]
and
\[
\TV(\Ber(\pvec),\Ber(\qvec))\ \le\ \norm{\pvec-\qvec}_2,
\]
the latter
under the additional assumption that $\pvec,\qvec$
are
symmetric about $1/2$
(i.e., $\pvec=\onevec-\qvec$).
In the Appendix, we extend the argument to \emph{quasi-symmetric} pairs, obtaining the upper bound with an extra factor $\sqrt{2}$.
Taken together, these results indicate that in a neighborhood of $1/2$, TV behaves like $\ell_2$, up to universal constants.

The focus of the present note is complementary: we turn to the opposite extreme of the parameter space, namely $p_i,q_i$ very close to $0$ (and, by symmetry, very close to $1$), and identify regimes in which $\TV\big(\Ber(\pvec),\Ber(\qvec)\big)$ admits a particularly simple description.
A key tool is the \emph{slice decomposition}: if $\Delta_k(\pvec,\qvec)$ denotes the total absolute discrepancy between the two measures on the $k$-th Hamming slice, then
\beqn
\label{eq:tv-delta}
2\TV(\Ber(\pvec),\Ber(\qvec))=\sum_{k=0}^n \Delta_k(\pvec,\qvec),
\eeqn
so controlling TV reduces to understanding which slices dominate.

\medskip\noindent\textbf{Main results.}
For a subset $S\subseteq[n]$ and a parameter vector $\yvec\in[0,1]^n$,
let
$P_S(\yvec)$ denote the mass assigned by $\Ber(\yvec)$ to the atom with ones in~$S$
(see \eqref{eq:PS_def} below),
and define the slice discrepancies $\Delta_k(\pvec,\qvec)$ as in \defref{def:slice_discrepancies}.
Our first theorem shows that in the tiny regime, the TV distance is equivalent to $\normone{\pvec-\qvec}$ up to constants.

\begin{theorem}[Tiny regime: $\ell_1$ geometry]\label{thm:intro_tiny_tv}
For $\pvec,\qvec\in[0,1/n^2]^n$,
\[
\frac14\,\normone{\pvec-\qvec}
\ \le\
\TV(\Ber(\pvec),\Ber(\qvec))
\ \le\
\normone{\pvec-\qvec}.
\]
\end{theorem}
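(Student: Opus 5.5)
The upper bound is the standard subadditivity of TV over product measures, obtained by coupling coordinatewise. The maximal coupling of $\Ber(p_i)$ and $\Ber(q_i)$ disagrees with probability $|p_i-q_i|$, and a union bound over coordinates gives
\[
\TV(\Ber(\pvec),\Ber(\qvec))\le\sum_i \TV(\Ber(p_i),\Ber(q_i))=\normone{\pvec-\qvec}.
\]
This holds with no assumption on the parameters.

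For the lower bound I plan to use the slice decomposition \eqref{eq:tv-delta} and keep only the slices $k=0$ and $k=1$:
\[
2\TV\ \ge\ \Delta_0+\Delta_1,
\]
where $\Delta_0=|P_\emptyset(\pvec)-P_\emptyset(\qvec)|$ and $\Delta_1=\sum_i |P_{\{i\}}(\pvec)-P_{\{i\}}(\qvec)|$. Write $A_i=\prod_{j\ne i}(1-p_j)$ and $B_i=\prod_{j\ne i}(1-q_j)$, so that $P_{\{i\}}(\pvec)=p_iA_i$ and $P_{\{i\}}(\qvec)=q_iB_i$. Then
\[
p_iA_i-q_iB_i=(p_i-q_i)A_i+q_i(A_i-B_i).
\]
In the tiny regime, $A_i\ge 1-\sum_j p_j\ge 1-1/n\ge\tfrac12$ for $n\ge2$; the case $n=1$ is trivial since then TV equals $|p_1-q_1|$. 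For the error term, the telescoping estimate $|A_i-B_i|\le\sum_{j}|p_j-q_j|=\normone{\pvec-\qvec}$ holds. Combined with $q_i\le 1/n^2$, summing over $i$ gives
\[
\sum_i q_i|A_i-B_i|\le \frac1n\normone{\pvec-\qvec}.
\]
Hence
\[
\Delta_1\ge \Bigl(\min_i A_i-\tfrac1n\Bigr)\normone{\pvec-\qvec}\ge \Bigl(1-\tfrac2n\Bigr)\normone{\pvec-\qvec}.
\]

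This estimate degenerates for small $n$, and that is the main obstacle: the constant $\tfrac14$ requires $\Delta_1\ge\tfrac12\normone{\pvec-\qvec}$. The bound above delivers this for $n\ge4$, and at $n=4$ it gives exactly $1-2/n=\tfrac12$. For $n=2,3$ I will tighten the estimates instead of adding $\Delta_0$, which does not obviously help since it can cancel. First, $A_i\ge 1-(n-1)/n^2$, which is $3/4$ for $n=2$ and $7/9$ for $n=3$. Second, in the error term the $j=i$ contribution vanishes, because neither $A_i$ nor $B_i$ involves coordinate $i$. So
\[
\sum_i q_i|A_i-B_i|\le \sum_i q_i\sum_{j\neq i}|p_j-q_j|\le \frac1{n^2}(n-1)\,\normone{\pvec-\qvec}.
\]
The coefficient of $\normone{\pvec-\qvec}$ is therefore at least $1-2(n-1)/n^2$, which is $\tfrac12$ at $n=2$ and $\tfrac59$ at $n=3$, and this holds for all $n$. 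That gives $2\TV\ge\Delta_1\ge\tfrac12\normone{\pvec-\qvec}$, that is, $\TV\ge\tfrac14\normone{\pvec-\qvec}$.
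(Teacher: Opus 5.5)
Your proof is correct and follows essentially the paper's route: the same splitting $p_iA_i-q_iB_i=(p_i-q_i)A_i+q_i(A_i-B_i)$ on the singleton slice, the same bounds $A_i\ge 1-(n-1)/n^2$ and $|A_i-B_i|\le\sum_{j\neq i}|p_j-q_j|$, and the same exchange of summation, giving $\Delta_1\ge\bigl(1-2(n-1)/n^2\bigr)\normone{\pvec-\qvec}$. Your preliminary $1-2/n$ estimate for $n\ge4$ is unnecessary, since $1-2(n-1)/n^2\ge\tfrac12$ holds for every $n$ because $(n-2)^2\ge0$.
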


Our second theorem shows that in the small regime, the entire $\ell_1$ distance between the product measures is controlled by the singleton slice.

\begin{theorem}[Small regime: singletons control TV]\label{thm:intro_small_tv}
For $\pvec,\qvec\in[0,1/(2n)]^n$
with $n\ge2$, write
\[
\Delta_1(\pvec,\qvec)
=
\sum_{i=1}^n
\Big|
p_i\prod_{j\neq i}(1-p_j)
-
q_i\prod_{j\neq i}(1-q_j)
\Big|.
\]
Then
\[
\frac12\,\Delta_1(\pvec,\qvec)
\ \le\
\TV(\Ber(\pvec),\Ber(\qvec))
\ \le\
\left(2-\frac1n\right)\Delta_1(\pvec,\qvec).
\]
\end{theorem}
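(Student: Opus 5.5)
Lower bound: restrict the identity \eqref{eq:tv-delta} to the $k=1$ slice, giving $2\TV\ge\Delta_1(\pvec,\qvec)$. All the work is in the upper bound, where we must show
\[
\sum_{k\ne 1}\Delta_k\le\Bigl(3-\tfrac2n\Bigr)\Delta_1 .
\]
There are three pieces to control: $\Delta_0$, the higher slices $k\ge2$, and the constant.

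\textbf{The slice $k=0$.} Write $a_j=1-p_j$, $b_j=1-q_j$, $A=\prod_j a_j$, $B=\prod_j b_j$. Then $\Delta_0=|A-B|$ and $\Delta_1=\sum_i|p_iA/a_i-q_iB/b_i|$. Set $r_i=p_i/a_i$ and $s_i=q_i/b_i$, so that $\Delta_1=\sum_i|r_iA-s_iB|$. The plan is to telescope
\[
A-B=\sum_i\Bigl(\prod_{j<i}b_j\Bigr)(a_i-b_i)\Bigl(\prod_{j>i}a_j\Bigr),
\]
and compare each term $|a_i-b_i|=|p_i-q_i|$ with $|r_iA-s_iB|$.

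This comparison is not termwise, because $r_iA-s_iB$ mixes the factor $r_i-s_i$ with the factor $A-B$. I would instead use the algebraic relation
\[
\sum_i(r_iA-s_iB)=\Pr_p(\text{slice }1)-\Pr_q(\text{slice }1),
\]
together with a direct monotonicity argument. Consider the map $\yvec\mapsto(P_{\emptyset}(\yvec),P_{\{1\}}(\yvec),\dots,P_{\{n\}}(\yvec))$ on $[0,1/(2n)]^n$. I would show that its $\ell_1$-increments in the singleton coordinates dominate those in the empty coordinate. Along the linear path $\yvec(t)=\qvec+t(\pvec-\qvec)$ this is not enough, since absolute values do not pass through integrals. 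So I would carry it out coordinate by coordinate: change one $p_i$ at a time, use the triangle inequality in $\Delta_0$, and check for a single-coordinate change that $|\delta P_\emptyset|\le c\sum_i|\delta P_{\{i\}}|$. This is where the smallness $p_i\le 1/(2n)$ enters: changing $y_i$ by $\delta$ moves $P_{\{i\}}$ by roughly $\delta\prod_{j\ne i}(1-y_j)$, moves each other $P_{\{j\}}$ by about $y_j\delta$, and moves $P_\emptyset$ by $\delta\prod_{j\ne i}(1-y_j)$.

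\textbf{The slices $k\ge2$ (main obstacle).} The coordinate-by-coordinate reduction fails here as well, because the triangle inequality loses the cancellation across coordinates. I would try to prove directly a domination $\Delta_k\le\rho^{k-1}\Delta_1$ with $\rho\approx\sum_j p_j\le 1/2$, by writing each $P_S$ for $|S|=k$ through a product of singleton ratios. One has $P_S=A\prod_{i\in S}r_i$, and $\Delta_1$ controls the vector $(r_iA)_i$ against $(s_iB)_i$. The identity
\[
A\prod_{i\in S}r_i-B\prod_{i\in S}s_i
=\frac{\prod_{i\in S}(r_iA)-\prod_{i\in S}(s_iB)}{A^{k-1}}
\]
(with the $B$ term handled by writing $A^{k-1}$ versus $B^{k-1}$) lets me telescope the products. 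Summing over $|S|=k$ gives a bound by $\Delta_1$ times an elementary symmetric function of the $r$'s. There is an extra error term from $A^{k-1}-B^{k-1}$, and this is bounded using the $\Delta_0$ estimate above.

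\textbf{Summing and the constant.} Summing the geometric series $\sum_{k\ge2}$ with ratio at most $\tfrac12\cdot\tfrac{n}{n-1}$ type bounds, and adding the $k=0$ term, should yield $\sum_{k\ne1}\Delta_k\le(3-2/n)\Delta_1$. The exact constant is delicate, and I expect to tune it through the bound $\prod(1-p_j)\ge 1-\sum p_j\ge 1/2$ and the bound $\sum_{j\ne i}p_j\le (n-1)/(2n)$. The latter is the source of the $1/n$ correction.
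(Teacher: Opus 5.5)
Your lower bound and the reduction of the upper bound to $\sum_{k\ne1}\Delta_k\le(3-\tfrac2n)\Delta_1$ are correct, but both halves of the upper-bound plan have genuine gaps.

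\textbf{The slice $k=0$.} The coordinate-by-coordinate scheme hits exactly the obstruction you identified for the linear path. From $\Delta_0\le\sum_{\mathrm{steps}}\abs{\delta P_\emptyset}\le c\sum_{\mathrm{steps}}\sum_i\abs{\delta P_{\{i\}}}$ you cannot pass to $c\,\Delta_1$, because $\sum_{\mathrm{steps}}\abs{\delta P_{\{i\}}}\ge\abs{\delta_{\{i\}}}$ goes the wrong way. Indeed, for a step in coordinate $i$ one has $\abs{\delta P_\emptyset}=\abs{\delta P_{\{i\}}}$ exactly. So your per-step check holds with $c=1$ and uses no smallness, and if the transfer were valid it would give $\Delta_0\le\Delta_1$. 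That is false: for $\pvec=\lam_n\onevec$, $\qvec=(\lam_n-\epsilon)\onevec$, $\epsilon\to0$, the ratio $\Delta_0/\Delta_1$ tends to $2-\tfrac1n$.

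The missing idea is an absorption argument. Set $x_k=p_k-q_k$ and $B_k:=x_k\int_0^1\prod_{j\ne k}(1-u_j(t))\,dt$ along the segment $\uvec(t)$. Then $\Delta_0\le\sum_k\abs{B_k}$, and $\delta_{\{k\}}=B_k-(\text{cross terms})$. The cross terms are at most $\beta_n\sum_{m\ne k}\abs{B_m}$, because $u_k\prod_{j\ne k,m}(1-u_j)=\frac{u_k}{1-u_k}\prod_{j\ne m}(1-u_j)$ with $\frac{u_k}{1-u_k}\le\beta_n:=\frac1{2n-1}$. Summing gives $\sum_k\abs{B_k}\le\Delta_1+(n-1)\beta_n\sum_k\abs{B_k}$, hence $\Delta_0\le\frac{2n-1}{n}\Delta_1$. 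This is where $p_i\le\frac1{2n}$ genuinely enters. The same absorption would also rescue your coordinate-by-coordinate path.

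\textbf{The slices $k\ge2$.} Your target $\Delta_k\le\rho^{k-1}\Delta_1$ with $\rho\le\frac12$ is false. At the same configuration, $\Delta_2/\Delta_1\to\frac{3(n-1)}{2(2n-1)}$, which is $\frac35$ for $n=3$ and tends to $\frac34$. Moreover, in that limit all $\delta_S$ with $\abs{S}\ge1$ are positive and $\delta_\emptyset<0$. Hence simultaneously $\Delta_0/\Delta_1\to\frac{2n-1}{n}$ and $\sum_{k\ge2}\Delta_k/\Delta_1\to\frac{n-1}{n}$. So the constant $2-\frac1n$ is sharp, and neither piece can tolerate any loss at this configuration. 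Uniform bounds such as $\prod_j(1-p_j)\ge\frac12$, or bounding the ratios $B/A$ produced by your renormalized telescoping by $(1-\lam_n)^{-n}$ (whereas $B/A\to1$ at the extremal configuration), are strictly lossy there. They therefore cannot yield $3-\frac2n$.

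The paper instead averages the odds recursion $\delta_S=\frac{p_i}{1-p_i}\delta_{S\setminus\{i\}}+\frac{x_i}{(1-p_i)(1-q_i)}P_{S\setminus\{i\}}(\qvec)$ over $i\in S$. This gives $k\Delta_k\le\beta_n(n-k+1)\Delta_{k-1}+(\cdots)\normone{\pvec-\qvec}$. It then bounds $\normone{\pvec-\qvec}\le\frac{2n-1}{n(1-\lam_n)^{n-1}}\Delta_1$, again by absorption. It bounds the Poisson--binomial mass $\mathbb{P}[\sum_{\ell\ne i}Z_\ell=k-1]$, with $Z_\ell\sim\Ber(q_\ell)$, by its binomial extremum. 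The factors $(1-\lam_n)^{\pm(n-1)}$ then cancel exactly, and the resulting explicit recursion sums in closed form to exactly $\frac{n-1}{n}$.
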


\section{Definitions and notation}\label{sec:setup}
We write $A\lesssim B$ to mean $A\le C B$ for a universal constant $C>0$,
and $A\gtrsim B$ similarly; $A\asymp B$ means both $A\lesssim B$ and $B\lesssim A$.
For integer $n\ge 1$, $[n]:=\{1,2,\dots,n\}$.
For a parameter vector $\yvec=(y_1,\dots,y_n)\in[0,1]^n$ and a subset $S\subseteq[n]$, define the product Bernoulli mass
\begin{equation}\label{eq:PS_def}
P_S(\yvec)
\;:=\;
\prod_{i\in S} y_i\,\prod_{i\notin S}(1-y_i).
\end{equation}
In particular, $P_\emptyset(\yvec)=\prod_{i=1}^n(1-y_i)$ is the probability of the all-zeros atom.

\begin{definition}[Slice discrepancies]\label{def:slice_discrepancies}
For $\pvec,\qvec\in[0,1]^n$ and $S\subseteq[n]$, define
\[
\delta_S(\pvec,\qvec):=P_S(\pvec)-P_S(\qvec).
\]
For each $k\in\{0,1,\dots,n\}$ define the absolute $k$-slice discrepancy
\[
\Delta_k(\pvec,\qvec):=\sum_{S\subseteq[n]:\,|S|=k}\abs{\delta_S(\pvec,\qvec)}.
\]
\end{definition}

We will often abbreviate $\delta_S:=\delta_S(\pvec,\qvec)$ and $\Delta_k:=\Delta_k(\pvec,\qvec)$ when the pair $(\pvec,\qvec)$ is clear. In particular \eqref{eq:tv-delta} holds.
We also write
\[
\xvec:=\pvec-\qvec,\qquad x_i:=p_i-q_i,\qquad \normone{\pvec-\qvec}=\sum_{i=1}^n\abs{x_i}.
\]

\section{Proof of Theorem \ref{thm:intro_tiny_tv}}
\label{sec:tiny}
In this section we assume
\begin{equation}\label{eq:tiny_regime_assumption}
\pvec,\qvec\in\Big[0,\frac{1}{n^2}\Big]^n.
\end{equation}
Only the lower bound 
$
\frac14\normone{\pvec-\qvec}
\le
\TV(\Ber(\pvec),\Ber(\qvec))
$
requires proof;
the upper bound
$
\TV(\Ber(\pvec),\Ber(\qvec))
\le
\normone{\pvec-\qvec}
$ is classic \cite[Disp. (1.4)]{kon25tens}.

For each $i\in[n]$, define
\[
P_{-i}(\yvec):=\prod_{j\neq i}(1-y_j),\qquad \text{so that}\qquad P_{\{i\}}(\yvec)=y_i\,P_{-i}(\yvec).
\]

\begin{lemma}
\label{lem:tiny_product_lb}
Under \eqref{eq:tiny_regime_assumption}, for every $i\in[n]$,
\[
P_{-i}(\pvec)=\prod_{j\neq i}(1-p_j)\ \ge\ \frac34.
\]
\end{lemma}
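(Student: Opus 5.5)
We will use the Weierstrass product inequality (a union-bound estimate),
\[
\prod_{j\in J}(1-a_j)\ \ge\ 1-\sum_{j\in J}a_j \qquad\text{for } a_j\in[0,1].
\]
It follows by induction on $|J|$. The base case is trivial. For the inductive step,
\[
\prod_{j\in J\cup\{k\}}(1-a_j)\ \ge\ \Big(1-\sum_{j\in J}a_j\Big)(1-a_k)\ \ge\ 1-\sum_{j\in J}a_j-a_k .
\]
Here the first step is valid if $1-\sum_{j\in J} a_j\ge 0$. If that quantity is negative, the claim holds anyway, since the product is nonnegative.

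We apply this with $J=[n]\setminus\{i\}$ and $a_j=p_j$. Under \eqref{eq:tiny_regime_assumption}, each $p_j\le 1/n^2$, so
\[
P_{-i}(\pvec)\ \ge\ 1-\frac{n-1}{n^2}.
\]

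It remains to check that $(n-1)/n^2\le 1/4$ for every $n\ge1$. This is equivalent to $4n-4\le n^2$, that is, $(n-2)^2\ge0$. Hence $P_{-i}(\pvec)\ge 3/4$, with equality possible only when $n=2$ and $p_j=1/4$. The same argument applies verbatim to $\qvec$.

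There is no real obstacle. The only point needing care is the elementary inequality $(n-1)/n^2\le1/4$, whose maximum is attained at $n=2$; this is what fixes the constant $3/4$.
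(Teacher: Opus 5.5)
Your proof is correct and is essentially the paper's argument. The paper first bounds the product by $(1-1/n^2)^{n-1}$ and applies Bernoulli's inequality, with $n=1,2$ handled separately. You instead apply the Weierstrass product inequality directly, which covers those cases, and both arguments finish with $(n-2)^2\ge 0$. Your sign caveat in the induction is unnecessary: multiplying the inductive hypothesis by $1-a_k\ge 0$ preserves the inequality regardless of the sign of $1-\sum_{j\in J}a_j$.
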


\begin{proof}
If $n=1$, the product is empty and equals $1$.
Assume $n\ge 2$.
Since $p_j\le 1/n^2$, we have
\[
P_{-i}(\pvec)\ge \Big(1-\frac1{n^2}\Big)^{n-1}.
\]
For $n=2$ this equals $3/4$.
For $n\ge 3$, Bernoulli's inequality gives
\[
\Big(1-\frac1{n^2}\Big)^{n-1}\ge 1-\frac{n-1}{n^2}=\frac{n^2-n+1}{n^2}\ge \frac34,
\]
since $4(n^2-n+1)\ge 3n^2$ is equivalent to $(n-2)^2\ge 0$.
\end{proof}

\begin{lemma}
\label{lem:tiny_product_lipschitz}
Under \eqref{eq:tiny_regime_assumption}, for every $i\in[n]$,
\[
\abs{P_{-i}(\pvec)-P_{-i}(\qvec)}\ \le\ \sum_{k\neq i}\abs{x_k}.
\]
\end{lemma}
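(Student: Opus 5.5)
The plan is to use the standard telescoping (hybrid) argument for differences of products of numbers in $[0,1]$. Fix $i$ and enumerate the indices $k\neq i$ as $k_1<\dots<k_{n-1}$. For $0\le m\le n-1$ define the hybrid product
\[
H_m:=\prod_{t\le m}(1-q_{k_t})\prod_{t>m}(1-p_{k_t}),
\]
so that $H_0=P_{-i}(\pvec)$ and $H_{n-1}=P_{-i}(\qvec)$. Then
\[
P_{-i}(\pvec)-P_{-i}(\qvec)=\sum_{m=1}^{n-1}(H_{m-1}-H_m).
\]
The $H_{m-1}$ and $H_m$ differ only in the $k_m$-th factor, so
\[
H_{m-1}-H_m=\big[(1-p_{k_m})-(1-q_{k_m})\big]\cdot R_m=-x_{k_m}R_m ,
\]
where $R_m$ is the product of the remaining $n-2$ factors. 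Each remaining factor has the form $1-y$ with $y\in[0,1/n^2]\subseteq[0,1]$, so $0\le R_m\le 1$.

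Applying the triangle inequality then gives
\[
\abs{P_{-i}(\pvec)-P_{-i}(\qvec)}\le\sum_{m}\abs{x_{k_m}}=\sum_{k\neq i}\abs{x_k},
\]
which is the claim. The case $n=1$ is trivial, since both products are empty and the right-hand side is $0$.

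There is no real obstacle here. The only point to check is that every factor lies in $[0,1]$, which holds for any parameters in $[0,1]$, so the tiny-regime assumption \eqref{eq:tiny_regime_assumption} is not actually needed for this lemma. As an alternative to telescoping, one could apply the mean value theorem to the multilinear map $\yvec\mapsto\prod_{j\ne i}(1-y_j)$ along the segment from $\qvec$ to $\pvec$. Its partial derivatives are bounded by $1$ in absolute value on $[0,1]^n$, which gives the same $\ell_1$ bound.
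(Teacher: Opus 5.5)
Your telescoping argument is correct; it differs from the paper's only in the path used to connect $\qvec$ to $\pvec$.

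\begin{itemize}
\item \textbf{The paper's route.} It applies the mean value theorem to the scalar function $t\mapsto P_{-i}(\uvec(t))$ along the straight segment $\uvec(t)=\qvec+t(\pvec-\qvec)$. This gives $P_{-i}(\pvec)-P_{-i}(\qvec)=-\sum_{k\neq i}x_k\prod_{j\neq i,k}(1-u_j(\xi))$ for a single $\xi\in(0,1)$.
\item \textbf{Your route.} You move along a coordinate-by-coordinate hybrid path. This yields the exact algebraic identity $P_{-i}(\pvec)-P_{-i}(\qvec)=-\sum_m x_{k_m}R_m$, with no differentiability or intermediate point needed.
\end{itemize}

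In both cases the only input is that each factor $1-y$ lies in $[0,1]$. So your remark that the tiny-regime hypothesis is superfluous applies equally to the paper's proof, whose bound $0\le 1-u_j(\xi)\le 1$ holds on all of $[0,1]^n$.

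Your version is slightly more elementary and self-contained. The paper's version has the advantage of introducing the segment parametrization $\uvec(t)$, which it reuses with finer bookkeeping of the derivative in Theorem~\ref{thm:Delta0_bound} and Lemma~\ref{lem:l1_vs_Delta1}.
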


\begin{proof}
If $n=1$ then $P_{-1}(\cdot)\equiv 1$ and the claim is trivial.
Assume $n\ge 2$ and consider the segment $\uvec(t):=\qvec+t(\pvec-\qvec)$.
The function $t\mapsto P_{-i}(\uvec(t)) $ is differentiable, and by the mean value theorem
\[
P_{-i}(\pvec)-P_{-i}(\qvec)=\sum_{k\neq i}\Big(-\prod_{j\neq i,k}(1-u_j(\xi))\Big)x_k\quad\text{for some }\xi\in(0,1).
\]
Taking absolute values and using $0\le (1-u_j(\xi))\le 1$ gives
\[
\abs{P_{-i}(\pvec)-P_{-i}(\qvec)}\le \sum_{k\neq i}\abs{x_k}.
\]
\end{proof}

\begin{proof}[Proof of Theorem \ref{thm:intro_tiny_tv}]
Fix $i\in[n]$.
Using $P_{\{i\}}(\yvec)=y_iP_{-i}(\yvec)$ we expand
\begin{align*}
P_{\{i\}}(\pvec)-P_{\{i\}}(\qvec)
&=p_iP_{-i}(\pvec)-q_iP_{-i}(\qvec)\\
&=(p_i-q_i)P_{-i}(\pvec)+q_i\big(P_{-i}(\pvec)-P_{-i}(\qvec)\big)\\
&=x_iP_{-i}(\pvec)+q_i\big(P_{-i}(\pvec)-P_{-i}(\qvec)\big).
\end{align*}
By the reverse triangle inequality,
\[
\abs{P_{\{i\}}(\pvec)-P_{\{i\}}(\qvec)}
\ge \abs{x_i}\,P_{-i}(\pvec)-q_i\,\abs{P_{-i}(\pvec)-P_{-i}(\qvec)}.
\]
Apply Lemma~\ref{lem:tiny_product_lb} and Lemma~\ref{lem:tiny_product_lipschitz} to obtain
\[
\abs{P_{\{i\}}(\pvec)-P_{\{i\}}(\qvec)}
\ge \frac34\abs{x_i}-q_i\sum_{k\neq i}\abs{x_k}.
\]
Summing over $i$ yields
\[
\Delta_1\ge \frac34\sum_{i=1}^n\abs{x_i}-\sum_{i=1}^n q_i\sum_{k\neq i}\abs{x_k}
=\frac34\normone{\pvec-\qvec}-\sum_{k=1}^n\abs{x_k}\sum_{i\neq k}q_i.
\]
Because $q_i\le 1/n^2$, for each fixed $k$ we have $\sum_{i\neq k}q_i\le (n-1)/n^2\le 1/4$.
Therefore the last term is at most $\tfrac14\normone{\pvec-\qvec}$, and
\[
\Delta_1\ge \Big(\frac34-\frac14\Big)\normone{\pvec-\qvec}=\frac12\normone{\pvec-\qvec}.
\]
Finally,
\[
\TV(\Ber(\pvec),\Ber(\qvec))
=\frac12\sum_{k=0}^n\Delta_k(\pvec,\qvec)
\ge \frac12\,\Delta_1(\pvec,\qvec)
\ge \frac14\,\normone{\pvec-\qvec}.
\]
The upper bound $\TV(\Ber(\pvec),\Ber(\qvec))\le \normone{\pvec-\qvec}$ is standard and holds for all $\pvec,\qvec\in[0,1]^n$.
\end{proof}

\section{Proof of Theorem \ref{thm:intro_small_tv}}
\label{sec:small}

In this section we assume $n\ge 2$ and work in the 
``small'' parameter domain
\begin{equation}\label{eq:small_regime_domain}
\pvec,\qvec\in[0,\lam_n]^n,\qquad \lam_n:=\frac{1}{2n}.
\end{equation}
It is convenient to also introduce
\begin{equation}\label{eq:beta_def}
\beta_n:=\frac{\lam_n}{1-\lam_n}=\frac{1}{2n-1}.
\end{equation}
All slice discrepancies $\Delta_k$ below are understood for the same pair $(\pvec,\qvec)$.

\begin{theorem}[$\Delta_0$ bound]\label{thm:Delta0_bound}
For $\pvec,\qvec\in[0,\lam_n]^n$,
\[
\Delta_0\ \le\ \left(2-\frac1n\right)\Delta_1
=\frac{2n-1}{n}\,\Delta_1.
\]
\end{theorem}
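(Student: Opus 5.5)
We may assume, by the symmetry $\pvec\leftrightarrow\qvec$, that $A:=P_\emptyset(\pvec)\ge B:=P_\emptyset(\qvec)$, so $\Delta_0=A-B$. The plan is to rewrite everything in odds coordinates. Set $a_i:=p_i/(1-p_i)$ and $b_i:=q_i/(1-q_i)$. On $[0,\lam_n]$ the map $y\mapsto y/(1-y)$ is increasing, so $a_i,b_i\in[0,\beta_n]$ by \eqref{eq:beta_def}. Then
\[
P_{\{i\}}(\pvec)=a_iA,\qquad P_{\{i\}}(\qvec)=b_iB,\qquad A=\prod_j\frac1{1+a_j},\qquad B=\prod_j\frac1{1+b_j}.
\]
Write $t:=B/A\in[0,1]$. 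Dividing by $A$, the claim becomes
\[
1-t\ \le\ \Big(2-\frac1n\Big)\sum_i\abs{a_i-tb_i},
\qquad\text{subject to}\qquad
\frac1t=\prod_j\frac{1+b_j}{1+a_j}.
\]
This is a finite-dimensional inequality whose only coupling is the product constraint.

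Next I would control $1/t$ coordinatewise. Write $b_j-a_j=(tb_j-a_j)+(1-t)b_j$. Then
\[
\frac{1+b_j}{1+a_j}\le 1+\frac{(tb_j-a_j)_+ +(1-t)b_j}{1+a_j}\le 1+d_j+(1-t)b_j,
\qquad d_j:=(tb_j-a_j)_+ ,
\]
and $D:=\sum_jd_j\le\sum_j\abs{a_j-tb_j}=\Delta_1/A$. Multiplying over $j$, and using either $\prod(1+u_j)\le e^{\sum u_j}$ or the sharper AM--GM bound $\prod(1+u_j)\le(1+\frac1n\sum u_j)^n$, gives
\[
\frac1t\le \Big(1+\frac{D+(1-t)\sum_jb_j}{n}\Big)^n,\qquad \sum_jb_j\le n\beta_n=\frac{n}{2n-1}.
\]
Combined with $1/t\ge 1+(1-t)$, or better $\ln(1/t)\ge 1-t$, this produces a linear inequality of the form $(1-t)\,c_n\le D$. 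A crude exponential version already yields $c_n=\frac{n-1}{2n-1}$, which gives the shape of the bound but not the stated constant.

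The main obstacle is the constant: the crude estimate gives $\frac{2n-1}{n-1}$, whereas we need $\frac{2n-1}{n}$. To close the gap I would not discard the negative parts $(a_j-tb_j)_+$. They contribute to $\Delta_1$ on top of $D$, so $\Delta_1/A=D+E$ with $E:=\sum_j(a_j-tb_j)_+$. They also lower $1/t$, through the factor $\frac1{1+a_j}$ that I dropped above. So I would redo the estimate keeping the $1+a_j$ denominators exactly. By convexity the extremal configurations should have every $b_j\in\{0,\beta_n\}$ and $a_j=tb_j$ or $a_j=0$. On such configurations the inequality reduces to a one-variable check in $k=\#\{j:b_j=\beta_n\}\le n$ and $t$, using $(1+\beta_n)^{-1}=1-\lam_n$. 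I expect this final one-parameter verification to be where the constant $2-\frac1n$ emerges: the value $(2n-1)/n=1/(n\beta_n)$ suggests the key step is exactly the bound $(1-t)\sum_jb_j\le (1-t)\,n\beta_n$, paired with a sharper lower bound $\frac1t-1\ge\frac{1-t}{t}$ in place of $1-t$.

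As a sanity check, the extreme case $\pvec=0$, $\qvec=\lam_n\onevec$ gives ratio $\Delta_0/\Delta_1=\frac{1-(1-\lam_n)^n}{n\lam_n(1-\lam_n)^{n-1}}\approx 2(e^{1/2}-1)<2-\frac1n$. So the inequality has some slack there, and a slightly lossy argument should still suffice.
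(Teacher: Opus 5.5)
Your proposal has a genuine gap: it proves a weaker constant than the statement, and the proposed refinement would fail. The odds reformulation and the coordinatewise estimate are correct. But with $\sum_j b_j\le n\beta_n$ and $\ln(1/t)\ge 1-t$ they prove only $(1-t)(1-n\beta_n)\le D$, i.e.\ $\Delta_0\le\frac{2n-1}{n-1}\Delta_1$. This is weaker than the statement by a factor $\frac{n}{n-1}$: at $n=2$ it gives $3\Delta_1$ instead of $\tfrac32\Delta_1$.

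Everything after that is a plan, and the plan cannot work as described, because the constant $2-\frac1n$ is sharp. Take $\pvec=(\lam_n-\epsilon)\onevec$ and $\qvec=\lam_n\onevec$. Then $\Delta_0\approx n\epsilon(1-\lam_n)^{n-1}$ and $\Delta_1\approx n\epsilon(1-\lam_n)^{n-2}(1-n\lam_n)$, so $\Delta_0/\Delta_1\to 2(1-\lam_n)=2-\frac1n$ as $\epsilon\to0$. This worst case is a degenerate limit: $t\to1$, $\Delta_1\to0$, and $a_j$ lies strictly between $0$ and $tb_j$. It is not one of your vertex configurations, and the vertex family genuinely falls short. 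For $n=2$, every configuration with $b_j\in\{0,\tfrac13\}$ and $a_j\in\{0,tb_j\}$ has ratio at most $\tfrac43$; the maximum is at $\qvec=(\tfrac14,\tfrac14)$, $\pvec=(0,\tfrac3{16})$, $t=\tfrac9{13}$, which is below $\tfrac32$. So the asserted convexity reduction is false. No convexity was available in any case: the constraint $1/t=\prod_j(1+b_j)/(1+a_j)$ is nonlinear and the objective is a ratio.

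For the same reason your closing remark is misleading. The point $\pvec=0$ is far from extremal, and there is no slack to spend. Since the extremum sits at $t\to1$, replacing $1-t$ by $(1-t)/t$ (an identity anyway) changes only second-order terms. The first-order loss is the factor $\frac{1}{1+a_j}$ you dropped. At the extremal it equals $1-\lam_n$, and it is exactly what turns your $1-n\beta_n$ into the needed $\frac{1-n\lam_n}{1-\lam_n}=1-(n-1)\beta_n$. Keeping it in general requires handling coordinates where $a_j$ is much smaller than $b_j$, which your proposal does not do.

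The idea you are missing, which the paper uses, is to avoid the global product constraint entirely. Instead, compare $\Delta_0$ with each singleton along the segment $\uvec(s)=\qvec+s(\pvec-\qvec)$.
\begin{itemize}
\item Set $I_k:=x_k\int_0^1\prod_{j\neq k}(1-u_j(s))\,ds$. Then $P_\emptyset(\pvec)-P_\emptyset(\qvec)=-\sum_k I_k$, so $\Delta_0\le\sum_k\abs{I_k}$.
\item Differentiating $P_{\{k\}}$ along the same segment gives $I_k=\delta_{\{k\}}+\sum_{m\neq k}x_m\int_0^1 u_k\prod_{j\neq k,m}(1-u_j)\,ds$.
\item The rewriting $u_k\prod_{j\neq k,m}(1-u_j)=\frac{u_k}{1-u_k}\prod_{j\neq m}(1-u_j)$ bounds each cross term by $\beta_n\abs{I_m}$.
\item Each $m$ appears only for the $n-1$ indices $k\neq m$. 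Absorbing then gives $\sum_k\abs{I_k}\le\Delta_1/(1-(n-1)\beta_n)=\frac{2n-1}{n}\Delta_1$.
\end{itemize}
The count $n-1$ rather than $n$ is exactly what your global estimate loses. Every inequality in this chain becomes tight in the limit $\pvec,\qvec\to\lam_n\onevec$ along $\onevec$, which is why the paper's argument reaches the sharp constant.
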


\begin{proof}
Let $g(\yvec):=P_\emptyset(\yvec)=\prod_{j=1}^n(1-y_j)$ and define the segment $\uvec(t):=\qvec+t(\pvec-\qvec)$.
By the fundamental theorem of calculus,
\[
P_\emptyset(\pvec)-P_\emptyset(\qvec)=\int_0^1\frac{d}{dt}\,g(\uvec(t))\,dt.
\]
Write $x_k:=p_k-q_k$.
Since $\partial g/\partial y_k(\yvec)=-\prod_{j\neq k}(1-y_j)$,
\[
\frac{d}{dt}\,g(\uvec(t))=-\sum_{k=1}^n x_k\prod_{j\neq k}(1-u_j(t)).
\]
Define
\[
B_k:=x_k\int_0^1\prod_{j\neq k}(1-u_j(t))\,dt.
\]
Then $P_\emptyset(\pvec)-P_\emptyset(\qvec)=-\sum_{k=1}^n B_k$, so
$\Delta_0=\abs{P_\emptyset(\pvec)-P_\emptyset(\qvec)}\le \sum_k\abs{B_k}$.

Next fix $k\in[n]$ and consider $h_k(\yvec):=P_{\{k\}}(\yvec)=y_k\prod_{j\neq k}(1-y_j)$.
Differentiating along the same segment gives
\[
\delta_{\{k\}}=h_k(\pvec)-h_k(\qvec)=\int_0^1\frac{d}{dt}\,h_k(\uvec(t))\,dt.
\]
A direct computation yields
\[
\frac{d}{dt}\,h_k(\uvec(t))
=x_k\prod_{j\neq k}(1-u_j(t))\;-
\sum_{m\neq k} x_m\,u_k(t)\prod_{j\neq k,m}(1-u_j(t)).
\]
Integrating and rearranging shows
\[
B_k
=\delta_{\{k\}}+\sum_{m\neq k}x_m\int_0^1 u_k(t)\prod_{j\neq k,m}(1-u_j(t))\,dt.
\]
Taking absolute values and summing over $k$ gives
\begin{align*}
\sum_{k=1}^n\abs{B_k}
&\le \sum_{k=1}^n\abs{\delta_{\{k\}}}
 +\sum_{k=1}^n\sum_{m\neq k}\abs{x_m}\int_0^1 u_k(t)\prod_{j\neq k,m}(1-u_j(t))\,dt.
\end{align*}
For $k\neq m$ we use
\[
u_k(t)\prod_{j\neq k,m}(1-u_j(t))
=\frac{u_k(t)}{1-u_k(t)}\prod_{j\neq m}(1-u_j(t))
\le \beta_n\prod_{j\neq m}(1-u_j(t)),
\]
because $u_k(t)\in[0,\lam_n]$ implies $\frac{u_k(t)}{1-u_k(t)}\le \beta_n$.
Therefore
\[
\abs{x_m}\int_0^1 u_k(t)\prod_{j\neq k,m}(1-u_j(t))\,dt
\le \beta_n\abs{x_m}\int_0^1\prod_{j\neq m}(1-u_j(t))\,dt
=\beta_n\abs{B_m}.
\]
Summing over $k\neq m$ yields
\[
\sum_{k=1}^n\sum_{m\neq k}\abs{x_m}\int_0^1 u_k(t)\prod_{j\neq k,m}(1-u_j(t))\,dt
\le \beta_n(n-1)\sum_{m=1}^n\abs{B_m}.
\]
Hence
\[
\sum_{k=1}^n\abs{B_k}\le \Delta_1+\beta_n(n-1)\sum_{k=1}^n\abs{B_k}.
\]
Since $\beta_n(n-1)=\frac{n-1}{2n-1}<1$, we can absorb to obtain
\[
\sum_{k=1}^n\abs{B_k}\le \frac{1}{1-\beta_n(n-1)}\,\Delta_1=\frac{2n-1}{n}\,\Delta_1.
\]
Finally $\Delta_0\le\sum_k\abs{B_k}$ gives the claim.
\end{proof}

\begin{theorem}[$\Delta_2/\Delta_1$ bound]\label{thm:Delta2_bound}
For $\pvec,\qvec\in[0,\lam_n]^n$,
\[
\Delta_2\ \le\ \frac{3(n-1)}{2(2n-1)}\,\Delta_1.
\]
\end{theorem}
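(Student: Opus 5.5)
The plan is to reuse the segment/absorption machinery from the proof of Theorem~\ref{thm:Delta0_bound}, but to organise it so that sign information is not thrown away too early. The first thing I would try is the direct analogue of that proof. Set $\uvec(t)=\qvec+t(\pvec-\qvec)$ and, for $i\neq j$, differentiate $P_{\{i,j\}}(\uvec(t))$. This gives
\[
\delta_{\{i,j\}}=\int_0^1\Big(x_iu_j+x_ju_i\Big)\prod_{l\neq i,j}(1-u_l)\,dt-\sum_{m\neq i,j}x_m\int_0^1u_iu_j\prod_{l\neq i,j,m}(1-u_l)\,dt .
\]
Each term is then bounded with $u/(1-u)\le\beta_n$ in terms of $\abs{B_k}=\abs{x_k}\int_0^1\prod_{l\neq k}(1-u_l)\,dt$. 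Finally I would invoke the bound $\sum_k\abs{B_k}\le\frac{2n-1}{n}\Delta_1$ established inside the proof of Theorem~\ref{thm:Delta0_bound}.

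I do not expect this to suffice. Summing over pairs gives the factor $\beta_n(n-1)+\beta_n^2\binom{n-1}{2}$, and after multiplying by $\frac{2n-1}{n}$ this becomes $\frac{(n-1)(5n-4)}{2n(2n-1)}$. That exceeds the target $\frac{3(n-1)}{2(2n-1)}$ for large $n$. So the triangle inequality must be applied more cleverly.

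The refined plan is to express doubletons directly through singletons rather than through the $B_k$. With $r_i(\yvec):=y_i/(1-y_i)\le\beta_n$ one has the exact identity $P_{\{i,j\}}(\yvec)=r_i(\yvec)P_{\{j\}}(\yvec)$. Hence
\[
\delta_{\{i,j\}}=r_i(\pvec)\,\delta_{\{j\}}+P_{\{j\}}(\qvec)\big(r_i(\pvec)-r_i(\qvec)\big).
\]
I would average this with the version having $i\leftrightarrow j$, and possibly also with the version having the roles of $\pvec$ and $\qvec$ swapped. The choice of symmetrisation should be made so that the constant comes out right.

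The first part, summed over unordered pairs, contributes at most $\tfrac12\sum_j\abs{\delta_{\{j\}}}\sum_{i\neq j}r_i(\pvec)\le\frac{n-1}{2(2n-1)}\Delta_1$. The remaining budget for the second part is therefore $\frac{n-1}{2n-1}\Delta_1$. For that part I would write $r_i(\pvec)-r_i(\qvec)=x_i/((1-p_i)(1-q_i))$. This term should be compared with $\abs{\delta_{\{i\}}}$, or with $\abs{B_i}$, by the absorption trick of Theorem~\ref{thm:Delta0_bound}. The key input is that $\sum_{j\neq i}P_{\{j\}}(\qvec)\le\sum_{j\neq i}q_j\prod_{l\neq j}(1-q_l)$ carries exactly the product $\prod_{l\neq i}(1-q_l)$ (times a factor $\le\beta_n(n-1)$) that appears in $\abs{B_i}$ and in $P_{\{i\}}$.

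The main obstacle is this second term. Specifically, I need to convert $\abs{x_i}$ weighted by a product at the endpoint $\qvec$ into $\abs{B_i}$, which is weighted by an integral along the segment, without losing constants. If the endpoint comparison is too lossy, I would instead run the whole decomposition along the segment. That means differentiating $r_i(\uvec)P_{\{j\}}(\uvec)$ and writing $\delta_{\{i,j\}}=\int_0^1\big(r_i\,\tfrac{d}{dt}P_{\{j\}}+P_{\{j\}}\tfrac{d}{dt}r_i\big)dt$, so that both pieces reduce to quantities already controlled by $\Delta_1$ and $\sum_k\abs{B_k}$ with factors $\beta_n(n-1)/2$. I would then tune the split so that the total is exactly $\frac{3}{2}\cdot\frac{n-1}{2n-1}$.
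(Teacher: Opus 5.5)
There is a genuine gap. Your decomposition $\delta_{\{i,j\}}=r_i(\pvec)\,\delta_{\{j\}}+P_{\{j\}}(\qvec)\,\Delta r_i$ (with $\Delta r_i:=r_i(\pvec)-r_i(\qvec)$) is correct, as are the $i\leftrightarrow j$ averaging, your budget count, and your diagnosis that the direct segment analogue is too weak. But the proposal stops at exactly the step that carries the difficulty, and the routes you sketch for it do not work. Since $\sum_{j\neq i}P_{\{j\}}(\qvec)=P_\emptyset(\qvec)\sum_{j\neq i}r_j(\qvec)$, the second part is at most $\frac{\beta_n(n-1)}{2}\sum_i P_\emptyset(\qvec)\abs{\Delta r_i}$. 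So you need $\sum_i P_\emptyset(\qvec)\abs{\Delta r_i}\le 2\Delta_1$, with zero slack.

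Each of your proposed comparisons fails to deliver this:
\begin{itemize}
\item Comparing term by term with $\abs{B_i}$ through $\sum_k\abs{B_k}\le\frac{2n-1}{n}\Delta_1$ requires $\frac{P_{-i}(\qvec)}{1-p_i}\le\frac{2n}{2n-1}\int_0^1 P_{-i}(\uvec(t))\,dt$. This fails at $\qvec=0$, $\pvec=\lam_n\onevec$: the left side is $\frac{2n}{2n-1}$, while the integral is $<1$.
\item Averaging with the $\pvec\leftrightarrow\qvec$ version still fails in that configuration for large $n$.
\item An absorption comparing $\abs{x_i}$ with $\abs{\delta_{\{i\}}}$ as in Lemma~\ref{lem:l1_vs_Delta1} loses the factor $(1-\lam_n)^{-(n-1)}$.
\item Along the segment you cannot bound $\int_0^1 r_i\,\frac{d}{dt}P_{\{j\}}(\uvec(t))\,dt$ by $\beta_n\abs{\delta_{\{j\}}}$, because that derivative can change sign. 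It can even happen that $\delta_{\{j\}}=0$ while the integral is nonzero.
\end{itemize}

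The missing idea is the product rule for $P_{\{i\}}=P_\emptyset\,r_i$:
\[
\delta_{\{i\}}=P_\emptyset(\qvec)\,\Delta r_i+r_i(\pvec)\,\Delta P_\emptyset,\qquad \Delta P_\emptyset:=P_\emptyset(\pvec)-P_\emptyset(\qvec).
\]
Substitute $P_{\{j\}}(\qvec)\Delta r_i=r_j(\qvec)\bigl(\delta_{\{i\}}-r_i(\pvec)\Delta P_\emptyset\bigr)$ into your decomposition. This gives the exact identity $\delta_{\{i,j\}}=r_i(\pvec)\delta_{\{j\}}+r_j(\qvec)\delta_{\{i\}}-r_i(\pvec)r_j(\qvec)\Delta P_\emptyset$. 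Hence $\abs{\delta_{\{i,j\}}}\le\beta_n\bigl(\abs{\delta_{\{i\}}}+\abs{\delta_{\{j\}}}\bigr)+\beta_n^2\Delta_0$, with no averaging needed. Sum over pairs and use the \emph{statement} $\Delta_0\le\frac{2n-1}{n}\Delta_1$ of Theorem~\ref{thm:Delta0_bound}, not the intermediate $B_k$ bound termwise. You get $\beta_n(n-1)\Delta_1+\beta_n^2\binom{n}{2}\frac{2n-1}{n}\Delta_1=\frac{n-1}{2n-1}\Delta_1+\frac{n-1}{2(2n-1)}\Delta_1$, which is exactly the target. This is the paper's proof: its $S(\yvec;a,b)$ and odds-coordinate computation is this identity in disguise.

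If you prefer to keep the $\abs{B_i}$ comparison, it can be made lossless only by \emph{choosing} the endpoint, not averaging. By the $\pvec\leftrightarrow\qvec$ symmetry you may assume $P_\emptyset(\qvec)\le P_\emptyset(\pvec)$. Concavity of $t\mapsto\log P_\emptyset(\uvec(t))$ then gives $P_\emptyset(\uvec(t))\ge P_\emptyset(\qvec)$. It follows that $P_\emptyset(\qvec)\abs{\Delta r_i}\le\frac{2n}{2n-1}\abs{B_i}$, which closes your budget exactly.
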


\begin{proof}
Fix $a<b$.
Write $\delta_a:=\delta_{\{a\}}$, $\delta_b:=\delta_{\{b\}}$, and $\delta_{ab}:=\delta_{\{a,b\}}$.

We introduce an auxiliary quantity chosen so that a certain linear combination of singleton and doubleton masses isolates $\delta_{ab}$ cleanly in odds coordinates:
\[
S(\yvec;a,b)
:=\beta_n\big(P_{\{a\}}(\yvec)+P_{\{b\}}(\yvec)\big)-2P_{\{a,b\}}(\yvec).
\]
A direct expansion shows
\begin{equation}\label{eq:deltaab_identity}
\frac{\beta_n}{2}(\delta_a+\delta_b)-\delta_{ab}
=\frac12\Big(S(\pvec;a,b)-S(\qvec;a,b)\Big)
\;=:\;\frac12\,\Delta S_{ab}.
\end{equation}
By the triangle inequality,
\begin{equation}\label{eq:deltaab_bound_step1}
\abs{\delta_{ab}}
\le \frac{\beta_n}{2}\big(\abs{\delta_a}+\abs{\delta_b}\big)+\frac12\abs{\Delta S_{ab}}.
\end{equation}
Summing \eqref{eq:deltaab_bound_step1} over all $a<b$ yields
\begin{equation}\label{eq:Delta2_split}
\Delta_2
\le \frac{\beta_n}{2}(n-1)\Delta_1+\sum_{a<b}\frac12\abs{\Delta S_{ab}}.
\end{equation}

It remains to bound $\sum_{a<b}\abs{\Delta S_{ab}}$.
Introduce odds coordinates $o_i(\yvec):=\frac{y_i}{1-y_i}$.
On $[0,\lam_n]$ we have $0\le o_i(\yvec)\le \beta_n$.
Note the factorizations
\[
P_{\{a\}}(\yvec)=P_\emptyset(\yvec)\,o_a(\yvec),\qquad
P_{\{a,b\}}(\yvec)=P_\emptyset(\yvec)\,o_a(\yvec)o_b(\yvec).
\]
Therefore $S(\yvec;a,b)=P_\emptyset(\yvec)\,H(\yvec;a,b)$ where
\[
H(\yvec;a,b):=\beta_n\big(o_a(\yvec)+o_b(\yvec)\big)-2o_a(\yvec)o_b(\yvec).
\]
Write
$\Delta P_\emptyset:=P_\emptyset(\pvec)-P_\emptyset(\qvec)$
and $\Delta o_i:=o_i(\pvec)-o_i(\qvec)$.
Then
\[
\Delta S_{ab}=P_\emptyset(\pvec)\big(H(\pvec;a,b)-H(\qvec;a,b)\big)+H(\qvec;a,b)\,\Delta P_\emptyset.
\]
Using the bilinear identity
\[
H(\pvec;a,b)-H(\qvec;a,b)
=(\beta_n-2o_b(\qvec))\,\Delta o_a+(\beta_n-2o_a(\pvec))\,\Delta o_b,
\]
and the relations
\[
P_\emptyset(\pvec)\,\Delta o_a=\delta_a-o_a(\qvec)\,\Delta P_\emptyset,
\qquad
P_\emptyset(\pvec)\,\Delta o_b=\delta_b-o_b(\qvec)\,\Delta P_\emptyset,
\]
one obtains the identity
\[
\Delta S_{ab}
=(\beta_n-2o_b(\qvec))\delta_a+(\beta_n-2o_a(\pvec))\delta_b+2o_a(\pvec)o_b(\qvec)\,\Delta P_\emptyset.
\]
Taking absolute values and using $o_i(\cdot)\in[0,\beta_n]$ gives
\[
\abs{\Delta S_{ab}}\le \beta_n\abs{\delta_a}+\beta_n\abs{\delta_b}+2\beta_n^2\abs{\Delta P_\emptyset}.
\]
Summing over $a<b$ and using $\abs{\Delta P_\emptyset}=\Delta_0$ yields
\[
\sum_{a<b}\frac12\abs{\Delta S_{ab}}
\le \frac{\beta_n}{2}(n-1)\Delta_1+\beta_n^2\binom{n}{2}\Delta_0.
\]
Apply Theorem~\ref{thm:Delta0_bound} to bound $\Delta_0\le \frac{2n-1}{n}\Delta_1$.
Since $\beta_n=\frac{1}{2n-1}$, we obtain
\[
\beta_n^2\binom{n}{2}\Delta_0
\le \beta_n^2\binom{n}{2}\frac{2n-1}{n}\Delta_1
=\frac{n-1}{2(2n-1)}\Delta_1
=\frac{\beta_n}{2}(n-1)\Delta_1.
\]
Therefore $\sum_{a<b}\frac12\abs{\Delta S_{ab}}\le \beta_n(n-1)\Delta_1$.
Substituting into \eqref{eq:Delta2_split} gives
\[
\Delta_2\le \frac{\beta_n}{2}(n-1)\Delta_1+\beta_n(n-1)\Delta_1
=\frac32\beta_n(n-1)\Delta_1
=\frac{3(n-1)}{2(2n-1)}\Delta_1,
\]
as required.
\end{proof}

\begin{lemma}[$\ell_1$ control by the $1$-slice]\label{lem:l1_vs_Delta1}
For any $\pvec,\qvec\in[0,\lam_n]^n$,
\[
\normone{\pvec-\qvec}\le K(n)\,\Delta_1,
\qquad
K(n):=\frac{2n-1}{n\,(1-\lam_n)^{n-1}}.
\]
In particular, if $\Delta_1=0$ then $\pvec=\qvec$.
\end{lemma}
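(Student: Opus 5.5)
The plan is to reuse the decomposition from the proof of Theorem~\ref{thm:intro_tiny_tv}, this time with the exact integral quantities $B_k$ from the proof of Theorem~\ref{thm:Delta0_bound} rather than a crude Lipschitz estimate. Recall
\[
B_k = x_k\int_0^1\prod_{j\neq k}(1-u_j(t))\,dt,
\qquad \uvec(t)=\qvec+t(\pvec-\qvec).
\]
The proof of Theorem~\ref{thm:Delta0_bound} establishes, before its last line, the intermediate absorption bound
\[
\sum_{k=1}^n\abs{B_k}\ \le\ \frac{2n-1}{n}\,\Delta_1 .
\]
We take this inequality as the starting point. It is stronger than the stated conclusion $\Delta_0\le\frac{2n-1}{n}\Delta_1$ of that theorem, but it appears verbatim in the proof, so we either cite it directly or restate it as a displayed claim.

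Next we bound each $\abs{B_k}$ from below in terms of $\abs{x_k}$. Each coordinate satisfies $u_j(t)\in[0,\lam_n]$, since it is a convex combination of $p_j$ and $q_j$. Hence every factor $1-u_j(t)$ is at least $1-\lam_n$, and for every $t$
\[
\prod_{j\neq k}(1-u_j(t))\ \ge\ (1-\lam_n)^{n-1}.
\]
Integrating over $t\in[0,1]$ gives $\abs{B_k}\ge (1-\lam_n)^{n-1}\abs{x_k}$. Summing over $k$ yields $(1-\lam_n)^{n-1}\normone{\pvec-\qvec}\le \sum_k\abs{B_k}\le \frac{2n-1}{n}\Delta_1$. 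Dividing by $(1-\lam_n)^{n-1}>0$ gives exactly $\normone{\pvec-\qvec}\le K(n)\Delta_1$. The "in particular" claim follows immediately: if $\Delta_1=0$, then $\normone{\pvec-\qvec}=0$, so $\pvec=\qvec$.

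The only delicate point is logical rather than computational. The needed inequality on $\sum_k\abs{B_k}$ is buried inside the proof of Theorem~\ref{thm:Delta0_bound}, so we must check that the absorption step there holds for all $\pvec,\qvec\in[0,\lam_n]^n$. It does: the absorption only uses $\beta_n(n-1)<1$, which holds for every $n\ge2$.

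It is also worth noting that $K(n)$ is bounded by an absolute constant, which is what the later applications need. Since $\lam_n=\frac{1}{2n}$, Bernoulli's inequality gives $(1-\frac1{2n})^{n-1}\ge 1-\frac{n-1}{2n}\ge\frac12$, so $K(n)\le 4$.
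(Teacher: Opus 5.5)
Your proof is correct and gives exactly the same constant $K(n)$, but by a somewhat different route from the paper's.

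The paper does not reuse the proof of Theorem~\ref{thm:Delta0_bound}. Instead, for each $j$ it applies the mean value theorem to $P_{\{j\}}$ along the segment. This produces an intermediate point $\uvec^{(j)}$ that depends on $j$, so the weights $\prod_{\ell\neq j}(1-u^{(j)}_\ell)$ differ from coordinate to coordinate. The paper therefore divides by this product immediately, using that it is at least $(1-\lam_n)^{n-1}$. It bounds the cross terms by $\frac{u^{(j)}_j}{1-u^{(j)}_m}\le\beta_n$ and absorbs directly in the unweighted norm $\normone{\pvec-\qvec}$, with the factor $1/(1-\beta_n(n-1))=\frac{2n-1}{n}$.

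You instead absorb in the weighted quantity $\sum_k\abs{B_k}$. Because all the $B_k$ are integrals along the same path with common weights, the cross terms are already controlled by $\beta_n\abs{B_m}$. You then pay the factor $(1-\lam_n)^{n-1}$ only once, at the end.

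What your route buys is economy and a bit of structural insight. The single inequality $\sum_k\abs{B_k}\le\frac{2n-1}{n}\Delta_1$ yields Theorem~\ref{thm:Delta0_bound} via $\Delta_0\le\sum_k\abs{B_k}$. It also yields the present lemma via $\sum_k\abs{B_k}\ge(1-\lam_n)^{n-1}\normone{\pvec-\qvec}$. The paper's route buys self-containment: it does not extract an intermediate inequality from inside another proof, which, as you note, you would need to promote to a displayed claim.

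One small correction to your closing remark. The later use of $K(n)$ in Lemma~\ref{lem:recursive} depends on its exact form, not merely on $K(n)\le 4$. The identity $\lam_n^{k-1}(1-\lam_n)^{n-k-1}K(n)=2\beta_n^{k-1}$ is what produces the recurrence \eqref{eq:Bk_recurrence}, and hence the exact value in Theorem~\ref{thm:sum_Bk}. Replacing $K(n)$ by $4$ would inflate that coefficient by the factor $2(1-\lam_n)^{n-2}$, which tends to $2e^{-1/2}\approx 1.21$ as $n\to\infty$.
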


\begin{proof}
Fix $j\in[n]$ and consider $F(t):=P_{\{j\}}(\qvec+t(\pvec-\qvec))$ for $t\in[0,1]$.
Then $F(1)-F(0)=\delta_{\{j\}}$.
By the mean value theorem, there exists $t_j\in(0,1)$ such that
\[
\delta_{\{j\}}=F'(t_j)=\nabla P_{\{j\}}(\uvec^{(j)})\cdot\xvec,
\qquad
\uvec^{(j)}:=\qvec+t_j(\pvec-\qvec)\in[0,\lam_n]^n.
\]
Since $P_{\{j\}}(\yvec)=y_j\prod_{\ell\neq j}(1-y_\ell)$, we have
\[
\frac{\partial}{\partial y_j}P_{\{j\}}(\yvec)=\prod_{\ell\neq j}(1-y_\ell),
\qquad
\frac{\partial}{\partial y_m}P_{\{j\}}(\yvec)=-y_j\prod_{\ell\neq j,m}(1-y_\ell)\ \ (m\neq j).
\]
Therefore
\[
\delta_{\{j\}}
=\Big(\prod_{\ell\neq j}(1-u^{(j)}_\ell)\Big)x_j
-\sum_{m\neq j}u^{(j)}_j\Big(\prod_{\ell\neq j,m}(1-u^{(j)}_\ell)\Big)x_m.
\]
Taking absolute values and using $u^{(j)}_\ell\le \lam_n$ gives
\begin{align*}
\Big(\prod_{\ell\neq j}(1-u^{(j)}_\ell)\Big)\abs{x_j}
&\le \abs{\delta_{\{j\}}}+
\sum_{m\neq j}u^{(j)}_j\Big(\prod_{\ell\neq j,m}(1-u^{(j)}_\ell)\Big)\abs{x_m}\\
\abs{x_j}
&\le \frac{\abs{\delta_{\{j\}}}}{(1-\lam_n)^{n-1}}+
\sum_{m\neq j}\frac{u^{(j)}_j}{1-u^{(j)}_m}\abs{x_m}
\le \frac{\abs{\delta_{\{j\}}}}{(1-\lam_n)^{n-1}}+\beta_n\sum_{m\neq j}\abs{x_m}.
\end{align*}
Summing over $j$ and noting $\sum_j\abs{\delta_{\{j\}}}=\Delta_1$ gives
\[
\normone{\pvec-\qvec}
\le \frac{\Delta_1}{(1-\lam_n)^{n-1}}+\beta_n(n-1)\normone{\pvec-\qvec}.
\]
Since $1-\beta_n(n-1)=\frac{n}{2n-1}$, we can absorb to obtain
\[
\normone{\pvec-\qvec}\le \frac{1}{1-\beta_n(n-1)}\cdot\frac{\Delta_1}{(1-\lam_n)^{n-1}}
=\frac{2n-1}{n\,(1-\lam_n)^{n-1}}\,\Delta_1,
\]
as claimed.
If $\Delta_1=0$ then $\normone{\pvec-\qvec}=0$ and hence $\pvec=\qvec$.
\end{proof}

We will need a simple extremal bound for Poisson--binomial masses under our small-parameter restriction.

\begin{lemma}[Poisson--binomial extremum under small odds]\label{lem:pb_extremum}
Let $N\ge 1$ and let $r_1,\dots,r_N\in[0,1)$.
Write odds $a_i:=\frac{r_i}{1-r_i}$ and assume $\sum_{i=1}^N a_i\le 1$.
Let $X=\sum_{i=1}^N X_i$ with independent $X_i\sim\mathrm{Ber}(r_i)$.
Then for every $m\in\{1,2,\dots,N\}$, $\mathbb{P}[X=m]$ is nondecreasing in each coordinate $r_i$.
If moreover $0\le r_i\le \lam\le \frac{1}{N+1}$ for all $i$, then for every $m\in\{1,\dots,N\}$,
\[
\mathbb{P}[X=m]\le \binom{N}{m}\lam^{m}(1-\lam)^{N-m}.
\]
\end{lemma}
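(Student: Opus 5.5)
Since $\mathbb{P}[X=m]$ is multilinear in $(r_1,\dots,r_N)$, the plan is to exploit this linearity one coordinate at a time. Fix $i$ and condition on $X_i$. Writing $Y:=X-X_i$, we get
\[
\mathbb{P}[X=m]=(1-r_i)\,\mathbb{P}[Y=m]+r_i\,\mathbb{P}[Y=m-1],
\]
so that
\[
\frac{\partial}{\partial r_i}\mathbb{P}[X=m]=\mathbb{P}[Y=m-1]-\mathbb{P}[Y=m].
\]
Monotonicity in $r_i$ is therefore equivalent to $\mathbb{P}[Y=m]\le \mathbb{P}[Y=m-1]$ for all $m\ge 1$. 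In other words, the Poisson--binomial law of $Y$, built from the remaining $N-1$ parameters, should be nonincreasing on $\{0,1,\dots,N-1\}$.

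To show that $Y$ has a nonincreasing mass function, I will use the generating function in odds coordinates. With $P_\emptyset:=\prod_{j\ne i}(1-r_j)$ we have
\[
\mathbb{P}[Y=k]=P_\emptyset\, e_k(a_{-i}),
\]
where $e_k$ is the $k$-th elementary symmetric polynomial of the odds $a_j$, $j\neq i$. The key inequality to establish is $e_{k}\le e_{k-1}\cdot \sum_j a_j$ up to the correct combinatorial factor. Each monomial of $e_k$ arises exactly $k$ times in the product $e_{k-1}\cdot e_1$, so
\[
k\,e_k\le e_{k-1}e_1\le e_{k-1},
\]
using the hypothesis $e_1(a_{-i})\le\sum_{j}a_j\le 1$. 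This gives $e_k\le e_{k-1}/k\le e_{k-1}$ for $k\ge1$. Hence $\mathbb{P}[Y=m]\le\mathbb{P}[Y=m-1]$, the partial derivative is nonnegative, and the first claim follows. Since $\mathbb{P}[X=m]$ is affine in $r_i$, a nonnegative derivative throughout means it is nondecreasing along the whole coordinate line, provided the hypothesis holds there.

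The main subtlety lies here. The odds condition must stay valid as $r_i$ increases, which is why I argue on the other coordinates only: the derivative does not depend on $r_i$, and it involves only $a_{-i}$, whose sum is at most $1$ regardless of $r_i$. The monotonicity statement is therefore unconditional in $r_i$, although the hypothesis $\sum a\le1$ constrains the domain.

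For the extremal bound, assume $r_i\le\lam\le 1/(N+1)$. Then each odds $a_j\le \lam/(1-\lam)\le 1/N$, so $\sum_j a_j\le1$ holds at every point of the box $[0,\lam]^N$, and in particular along each coordinate segment. Raising the coordinates one at a time from $r_i$ up to $\lam$ keeps us inside the box, and by the first part $\mathbb{P}[X=m]$ does not decrease at any step. At the endpoint all parameters equal $\lam$, giving the binomial mass $\binom{N}{m}\lam^m(1-\lam)^{N-m}$. The case $m\ge1$ is needed only for monotonicity, since $\mathbb{P}[X=0]$ is decreasing in every coordinate.
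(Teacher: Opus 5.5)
Your proof is correct and follows the paper's argument: you use the same derivative identity $\partial_{r_i}\mathbb{P}[X=m]=\mathbb{P}[Y=m-1]-\mathbb{P}[Y=m]$, the same odds factorization $\mathbb{P}[Y=k]=P_\emptyset\, e_k(a_{-i})$ with $k\,e_k\le e_1 e_{k-1}\le e_{k-1}$, and then push each coordinate up to $\lambda$ inside the box $[0,\lambda]^N$, where $\lambda/(1-\lambda)\le 1/N$ keeps the odds hypothesis valid. Two steps the paper states tersely are spelled out in your version: that the derivative depends only on $a_{-i}$, and that you reach the corner coordinate by coordinate.
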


\begin{proof}
Write $f_m:=\mathbb{P}[X=m]$.
Using the generating function factorization
\[
\prod_{i=1}^N\big((1-r_i)+r_i z\big)
= \left(\prod_{i=1}^N(1-r_i)\right)\prod_{i=1}^N(1+a_i z),
\]
we have $f_m=f_0\,e_m(a_1,\dots,a_N)$, where $e_m$ is the $m$th elementary symmetric polynomial.
For $m\ge 1$,
\[
e_m(a)=\frac1m\sum_{i=1}^N a_i\,e_{m-1}(a_1,\dots,\widehat{a}_i,\dots,a_N)
\le \frac1m\left(\sum_{i=1}^N a_i\right)e_{m-1}(a),
\]
(where $\widehat{a}_i$ indicates that $a_i$ is omitted)
so
\[
\frac{f_m}{f_{m-1}}=\frac{e_m}{e_{m-1}}\le \frac{1}{m}\sum_{i=1}^N a_i\le 1.
\]
Thus $f_{m-1}\ge f_m$ for all $m\ge 1$.

Now fix $j\in[N]$ and write $X^{(-j)}:=\sum_{i\neq j}X_i$.
Differentiating the pmf with respect to $r_j$ gives the standard identity
\[
\frac{\partial}{\partial r_j}\,\mathbb{P}[X=m]
=\mathbb{P}[X^{(-j)}=m-1]-\mathbb{P}[X^{(-j)}=m].
\]
Since removing one coordinate can only decrease $\sum_i a_i$, the law of $X^{(-j)}$ still satisfies the hypothesis
$\sum_{i\neq j}\frac{r_i}{1-r_i}\le 1$.
Therefore $\mathbb{P}[X^{(-j)}=m-1]\ge \mathbb{P}[X^{(-j)}=m]$ for every $m\ge 1$, and the derivative is nonnegative.
This proves coordinatewise monotonicity of $\mathbb{P}[X=m]$ for $m\ge 1$.

For the final inequality, if $r_i\le \lam\le 1/(N+1)$ then
$\sum_i \frac{\lam}{1-\lam}=\frac{N\lam}{1-\lam}\le 1$, so the monotonicity applies at the endpoint vector $(\lam,\dots,\lam)$.
Thus $\mathbb{P}[X=m]$ is maximized when all $r_i=\lam$, in which case $X\sim\mathrm{Bin}(N,\lam)$ and
$\mathbb{P}[X=m]=\binom{N}{m}\lam^m(1-\lam)^{N-m}$.
\end{proof}

\begin{lemma}[Recursive bound for $\Delta_k$]\label{lem:recursive}
Fix $k\in\{2,3,\dots,n\}$ and let $\pvec,\qvec\in[0,\lam_n]^n$.
If $\Delta_1=0$ then $\pvec=\qvec$ and hence $\Delta_k=0$ for all $k$.
If $\Delta_1>0$, then
\[
\frac{\Delta_k}{\Delta_1}
\le
\frac{\beta_n(n-k+1)}{k}\cdot\frac{\Delta_{k-1}}{\Delta_1}
+
\binom{n-1}{k-1}\lam_n^{k-1}(1-\lam_n)^{n-k-1}\cdot\frac{K(n)}{k},
\]
where $K(n)$ is as in Lemma~\ref{lem:l1_vs_Delta1}.
\end{lemma}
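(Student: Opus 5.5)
The first assertion is immediate from Lemma~\ref{lem:l1_vs_Delta1}: if $\Delta_1=0$ then $\normone{\pvec-\qvec}=0$, so $\pvec=\qvec$ and every $\delta_S$ vanishes. For $\Delta_1>0$ we prove the unnormalized inequality
\[
\Delta_k\le \frac{\beta_n(n-k+1)}{k}\Delta_{k-1}+\binom{n-1}{k-1}\lam_n^{k-1}(1-\lam_n)^{n-k-1}\frac{\normone{\pvec-\qvec}}{k},
\]
and then divide by $\Delta_1$ and apply Lemma~\ref{lem:l1_vs_Delta1} to replace $\normone{\pvec-\qvec}$ by $K(n)\Delta_1$.

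\textbf{Step 1: write each $k$-set mass from its $(k-1)$-subsets.} Fix $|S|=k$. For each $i\in S$ we have $P_S(\yvec)=o_i(\yvec)\,P_{S\setminus\{i\}}(\yvec)$. Averaging over $i\in S$ gives
\[
P_S=\frac1k\sum_{i\in S}o_i P_{S\setminus\{i\}}.
\]
Taking the difference between $\pvec$ and $\qvec$ and splitting each product yields
\[
\delta_S=\frac1k\sum_{i\in S}\Big(o_i(\qvec)\,\delta_{S\setminus\{i\}}+\Delta o_i\,P_{S\setminus\{i\}}(\pvec)\Big).
\]
We use $o_i(\qvec)\le\beta_n$. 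For the odds difference, the mean value theorem gives $\abs{\Delta o_i}\le \abs{x_i}/(1-\lam_n)^2$, since $d o/dy=(1-y)^{-2}$.

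\textbf{Step 2: sum over all $k$-sets.} Each $(k-1)$-set $T$ arises as $S\setminus\{i\}$ for exactly $n-k+1$ pairs $(S,i)$. The first part therefore totals at most $\frac{\beta_n(n-k+1)}{k}\Delta_{k-1}$. The second part totals
\[
\frac1k\sum_i\abs{\Delta o_i}\sum_{T\not\ni i,\ |T|=k-1}P_T(\pvec).
\]
For the inner sum, factor $P_T(\pvec)=(1-p_i)\cdot(\text{mass of }T\text{ under the product over }[n]\setminus\{i\})$. The inner sum then equals $(1-p_i)\,\mathbb{P}[X^{(-i)}=k-1]$, where $X^{(-i)}$ is Poisson--binomial with $N=n-1$ coordinates.

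\textbf{Step 3: apply the extremum lemma.} Lemma~\ref{lem:pb_extremum} applies with $N=n-1$ and $\lam=\lam_n=1/(2n)\le 1/n$. For $k-1\ge1$ it gives
\[
\mathbb{P}[X^{(-i)}=k-1]\le\binom{n-1}{k-1}\lam_n^{k-1}(1-\lam_n)^{n-k}.
\]
Combining this with the factor $(1-p_i)\le1$ and $\abs{\Delta o_i}\le\abs{x_i}(1-\lam_n)^{-2}$ gives the exponent $n-k-2$, one power short of the stated $n-k-1$.

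\textbf{Main obstacle: the constant.} Steps 1--3 lose one factor of $(1-\lam_n)$, so the remaining work is to recover it. The plan is to pair $\Delta o_i$ with $P_{S\setminus\{i\}}(\qvec)$ or $(\pvec)$ more cleverly. Specifically, we apply the mean value theorem directly to $y_i\mapsto o_i P_{S\setminus\{i\}}$ along the segment, i.e.\ to the map $t\mapsto P_S(\uvec(t))$. Differentiating in the $i$-th coordinate gives the derivative of $y_i/(1-y_i)\cdot(1-y_i)\cdot(\cdots)$, which equals $\prod_{j\in S\setminus i}y_j\prod_{j\notin S}(1-y_j)$ with no $(1-y_i)^{-2}$ loss. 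This is the cleaner route, so it replaces Step 1. Concretely, the FTC along $\uvec(t)$ gives
\[
\delta_S=\sum_{i\in S}x_i\int_0^1\prod_{j\in S\setminus i}u_j\prod_{j\notin S}(1-u_j)\,dt\;-\;\sum_{m\notin S}x_m\int_0^1\prod_{j\in S}u_j\prod_{j\notin S\cup m}(1-u_j)\,dt.
\]

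\textbf{Alternative decomposition and what remains open.} In this form the first sum is controlled via the Poisson--binomial bound. The integrand summed over $S\ni i$ equals $\mathbb{P}[X^{(-i)}=k-1]$ at $\uvec(t)$, which is bounded by $\binom{n-1}{k-1}\lam_n^{k-1}(1-\lam_n)^{n-k}$. This even beats the stated exponent, although the FTC decomposition does not contain the $\frac1k$ factor directly. The second sum must then be related to $\Delta_{k-1}$ through odds ratios, as in Theorem~\ref{thm:Delta0_bound}, using $u_j/(1-u_j)\le\beta_n$. I would try the averaged decomposition first, and only if the constants fail fall back to this FTC version. Reconciling the $1/k$ factor with the integrated form is where I expect the real difficulty.
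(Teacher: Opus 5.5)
As written, your proposal does not prove the stated inequality. Steps 1--3 give the second term only with $(1-\lam_n)^{n-k-2}$, and you leave recovering the missing factor open. That factor matters downstream. With exponent $n-k-1$, the inhomogeneous term $\binom{n-1}{k-1}\lam_n^{k-1}(1-\lam_n)^{n-k-1}K(n)/k$ collapses to exactly $\frac{2}{k}\binom{n-1}{k-1}\beta_n^{k-1}$. This is what the recurrence for $B_k(n)$ and the identity $\sum_{k\ge 2}B_k(n)=\frac{n-1}{n}$ rely on. Your version is larger by a factor $\frac{2n}{2n-1}$. The FTC fallback does not close the gap either. As sketched, its second sum, for fixed $m$, collapses to $\mathbb{P}[X^{(-m)}=k]$ along the path rather than to anything comparable to $\Delta_{k-1}$. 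It also has no $1/k$. So it bounds $\Delta_k$ by path-averaged Poisson--binomial masses and does not produce the stated recursion.

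The missing observation is that the loss in Step 3 is self-inflicted. You do not need the mean value theorem for the odds, because the difference is exact:
\[
o_i(\pvec)-o_i(\qvec)=\frac{p_i}{1-p_i}-\frac{q_i}{1-q_i}=\frac{x_i}{(1-p_i)(1-q_i)}.
\]
In your Step 2 the inner sum equals $(1-p_i)\,\mathbb{P}_{\pvec}[X^{(-i)}=k-1]$. Instead of bounding $(1-p_i)\le 1$, cancel it against the $(1-p_i)$ in this denominator. The $i$-th contribution then becomes
$\frac{\abs{x_i}}{k(1-q_i)}\,\mathbb{P}_{\pvec}[X^{(-i)}=k-1]\le \frac{\abs{x_i}}{k}\binom{n-1}{k-1}\lam_n^{k-1}(1-\lam_n)^{n-k-1}$,
which has exactly the stated exponent. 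With this single change, your averaged decomposition is the paper's proof. The paper uses the mirror-image split $o_i(\pvec)\,\delta_{S\setminus\{i\}}+\bigl(o_i(\pvec)-o_i(\qvec)\bigr)P_{S\setminus\{i\}}(\qvec)$ and cancels $(1-q_i)$ instead, which is equivalent.
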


\begin{proof}
Fix $S\subseteq[n]$ with $|S|=k$ and fix $i\in S$.
Using $P_S(\yvec)=\frac{y_i}{1-y_i}P_{S\setminus\{i\}}(\yvec)$ we write
\begin{align*}
\delta_S
&=
\frac{p_i}{1-p_i}P_{S\setminus\{i\}}(\pvec)-\frac{q_i}{1-q_i}P_{S\setminus\{i\}}(\qvec)\\
&=
\frac{p_i}{1-p_i}\big(P_{S\setminus\{i\}}(\pvec)-P_{S\setminus\{i\}}(\qvec)\big)
+\left(\frac{p_i}{1-p_i}-\frac{q_i}{1-q_i}\right)P_{S\setminus\{i\}}(\qvec)\\
&=\frac{p_i}{1-p_i}\,\delta_{S\setminus\{i\}}+\frac{p_i-q_i}{(1-p_i)(1-q_i)}\,P_{S\setminus\{i\}}(\qvec).
\end{align*}
Taking absolute values and summing over $i\in S$ gives
\[
k\abs{\delta_S}
\le
\sum_{i\in S}\left(\frac{p_i}{1-p_i}\abs{\delta_{S\setminus\{i\}}}
+\frac{\abs{x_i}}{(1-p_i)(1-q_i)}P_{S\setminus\{i\}}(\qvec)\right).
\]
Summing further over all $S$ with $|S|=k$ yields
\begin{align*}
k\Delta_k
&\le
\sum_{|S|=k}\sum_{i\in S}\frac{p_i}{1-p_i}\abs{\delta_{S\setminus\{i\}}}
+\sum_{|S|=k}\sum_{i\in S}\frac{\abs{x_i}}{(1-p_i)(1-q_i)}P_{S\setminus\{i\}}(\qvec).
\end{align*}

For the first term, $\frac{p_i}{1-p_i}\le \beta_n$ on $[0,\lam_n]$, and a multiplicity count gives
\[
\sum_{|S|=k}\sum_{i\in S}\abs{\delta_{S\setminus\{i\}}}
=(n-k+1)\sum_{|T|=k-1}\abs{\delta_T}
=(n-k+1)\Delta_{k-1}.
\]
Hence the first term is at most $\beta_n(n-k+1)\Delta_{k-1}$.

For the second term, re-index by $i$ and write $T=S\setminus\{i\}$ (so $|T|=k-1$ and $i\notin T$):
\begin{align*}
\sum_{|S|=k}\sum_{i\in S}\frac{\abs{x_i}}{(1-p_i)(1-q_i)}P_{S\setminus\{i\}}(\qvec)
&=
\sum_{i=1}^n\frac{\abs{x_i}}{(1-p_i)(1-q_i)}\sum_{\substack{T\subseteq[n]\setminus\{i\}\\ |T|=k-1}} P_T(\qvec).
\end{align*}
For each such $T$, the factor $(1-q_i)$ appears in $P_T(\qvec)$ (since $i\notin T$), so we can cancel it:
\[
\frac{1}{(1-p_i)(1-q_i)}P_T(\qvec)
=\frac{1}{1-p_i}\left(\prod_{j\in T}q_j\right)\left(\prod_{\ell\notin T,\ \ell\neq i}(1-q_\ell)\right).
\]
Thus
\[
\sum_{\substack{T\subseteq[n]\setminus\{i\}\\ |T|=k-1}}
\frac{1}{(1-p_i)(1-q_i)}P_T(\qvec)
=\frac{1}{1-p_i}\,\mathbb{P}\!\left[\sum_{\ell\neq i} Z_\ell = k-1\right],
\]
where $Z_\ell\sim\mathrm{Ber}(q_\ell)$ are independent.

Since $q_\ell\le \lam_n$ and $\lam_n\le 1/n=1/((n-1)+1)$, Lemma~\ref{lem:pb_extremum} (with $N=n-1$, $\lam=\lam_n$, and $m=k-1\ge 1$) gives
\[
\mathbb{P}\!\left[\sum_{\ell\neq i} Z_\ell = k-1\right]
\le \binom{n-1}{k-1}\lam_n^{k-1}(1-\lam_n)^{n-k}.
\]
Moreover $1-p_i\ge 1-\lam_n$, so $\frac{1}{1-p_i}\le \frac{1}{1-\lam_n}$.
Therefore the entire second term is bounded by
\[
\binom{n-1}{k-1}\lam_n^{k-1}(1-\lam_n)^{n-k-1}\sum_{i=1}^n\abs{x_i}.
\]
Apply Lemma~\ref{lem:l1_vs_Delta1} to bound $\sum_i\abs{x_i}\le K(n)\Delta_1$.
Putting everything together,
\[
k\Delta_k
\le \beta_n(n-k+1)\Delta_{k-1}
+\binom{n-1}{k-1}\lam_n^{k-1}(1-\lam_n)^{n-k-1}K(n)\,\Delta_1.
\]
Divide by $k\Delta_1$ (for $\Delta_1>0$) to obtain the claim.
\end{proof}

Define a numerical sequence $B_k(n)$ by $B_1(n):=1$ and, for $k\ge 2$,
\begin{equation}\label{eq:Bk_recurrence}
B_k(n)
:=
\frac{n-k+1}{k(2n-1)}\,B_{k-1}(n)
+
\binom{n-1}{k-1}\left(\frac{1}{2n-1}\right)^{k-1}\frac{2}{k}.
\end{equation}

\begin{corollary}[Universal slice bounds]\label{cor:Delta_k_bound}
For every $k\in\{2,\dots,n\}$ and every $\pvec,\qvec\in[0,\lam_n]^n$,
\[
\Delta_k(\pvec,\qvec)\le B_k(n)\,\Delta_1(\pvec,\qvec).
\]
\end{corollary}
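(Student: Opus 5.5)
The plan is induction on $k$, driven by Lemma~\ref{lem:recursive}. The recurrence \eqref{eq:Bk_recurrence} should be an upper bound for the coefficients appearing there. If $\Delta_1=0$, Lemma~\ref{lem:l1_vs_Delta1} gives $\pvec=\qvec$, so every $\Delta_k=0$ and the claim is trivial. We therefore assume $\Delta_1>0$ and prove $\Delta_k/\Delta_1\le B_k(n)$ for $k=1,\dots,n$. The base case $k=1$ holds with equality, since $B_1(n)=1$.

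For the inductive step at $k\ge2$, we start from Lemma~\ref{lem:recursive} and substitute $\Delta_{k-1}/\Delta_1\le B_{k-1}(n)$. The coefficient of $B_{k-1}(n)$ is nonnegative, so this substitution is legitimate. The first coefficient matches exactly, because $\beta_n(n-k+1)/k = (n-k+1)/(k(2n-1))$.

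The only real work is the second term. We must show
\[
\binom{n-1}{k-1}\lam_n^{k-1}(1-\lam_n)^{n-k-1}\frac{K(n)}{k}\le \binom{n-1}{k-1}\Big(\frac{1}{2n-1}\Big)^{k-1}\frac{2}{k}.
\]
Insert $K(n)=\frac{2n-1}{n(1-\lam_n)^{n-1}}$ and write $\lam_n^{k-1}=\beta_n^{k-1}(1-\lam_n)^{k-1}$. The left side then becomes
\[
\binom{n-1}{k-1}\beta_n^{k-1}\frac{1}{k}\cdot\frac{(1-\lam_n)^{k-1}(1-\lam_n)^{n-k-1}(2n-1)}{n(1-\lam_n)^{n-1}}.
\]
The powers of $(1-\lam_n)$ combine to $(1-\lam_n)^{-1}=\frac{2n}{2n-1}$. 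The extra factor is therefore
\[
\frac{2n-1}{n}\cdot\frac{2n}{2n-1}=2,
\]
which gives exactly the claimed second term. So the inequality in fact holds with equality.

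The main obstacle is just this exponent bookkeeping, in particular the $(1-\lam_n)^{n-k-1}$ factor at $k=n$, where the exponent is $-1$. That case still fits the same algebra, since it came from dividing by $1-p_i$ in Lemma~\ref{lem:recursive}. Once this identity is checked, the induction closes and the corollary follows.
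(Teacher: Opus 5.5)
Your proof is correct and is the same argument as the paper's. You dispose of $\Delta_1=0$ via Lemma~\ref{lem:l1_vs_Delta1}, then induct from $B_1(n)=1$ using Lemma~\ref{lem:recursive}, noting that the coefficient of $\Delta_{k-1}/\Delta_1$ is nonnegative. Your explicit check that $\lambda_n^{k-1}(1-\lambda_n)^{n-k-1}K(n)=2\beta_n^{k-1}$, so the lemma's coefficients coincide exactly with those in \eqref{eq:Bk_recurrence}, supplies the bookkeeping the paper leaves implicit.
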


\begin{proof}
If $\Delta_1=0$ then $\pvec=\qvec$ by Lemma~\ref{lem:l1_vs_Delta1}, hence $\Delta_k=0$.
If $\Delta_1>0$, Lemma~\ref{lem:recursive} yields the recursion \eqref{eq:Bk_recurrence} as an upper bound, starting from $B_1(n)=1$.
\end{proof}

\begin{lemma}[Closed form for $B_k(n)$]\label{lem:closed_form_Bk}
For every integer $k$ with $2\le k\le n$,
\[
B_k(n)
=
\frac{2k-1}{k(k-1)}\binom{n-2}{k-2}\,\frac{n-1}{(2n-1)^{k-1}}.
\]
\end{lemma}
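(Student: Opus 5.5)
We argue by induction on $k$, checking that the proposed closed form satisfies the recurrence \eqref{eq:Bk_recurrence}. Write $\beta:=\beta_n=1/(2n-1)$ and set
\[
C_k:=\frac{2k-1}{k(k-1)}\binom{n-2}{k-2}(n-1)\beta^{k-1}.
\]

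\textbf{Base case $k=2$.} The recurrence gives
\[
B_2=\frac{n-1}{2}\beta\cdot 1+(n-1)\beta\cdot 1=\frac32(n-1)\beta .
\]
The formula gives $C_2=\frac{3}{2}\binom{n-2}{0}(n-1)\beta=\frac32(n-1)\beta$. These agree. As a consistency check, this is exactly the constant in Theorem~\ref{thm:Delta2_bound}.

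\textbf{Inductive step.} Assume $B_{k-1}=C_{k-1}$ for some $k\ge3$. We need
\[
\frac{(n-k+1)\beta}{k}\,C_{k-1}+\binom{n-1}{k-1}\beta^{k-1}\frac{2}{k}=C_k .
\]
All three terms carry the common factor $\beta^{k-1}$, so we cancel it. Two binomial identities then put everything in terms of $\binom{n-2}{k-2}$. First,
\[
(n-k+1)\binom{n-2}{k-3}=(k-2)\binom{n-2}{k-2},
\]
so the first term becomes
\[
\frac{1}{k}\cdot\frac{2k-3}{(k-1)(k-2)}\,(k-2)\binom{n-2}{k-2}(n-1)=\frac{2k-3}{k(k-1)}\binom{n-2}{k-2}(n-1).
\]
Second,
\[
\binom{n-1}{k-1}=\frac{n-1}{k-1}\binom{n-2}{k-2},
\]
so the second term becomes
\[
\frac{2}{k(k-1)}\binom{n-2}{k-2}(n-1).
\]
Adding the two gives
\[
\frac{(2k-3)+2}{k(k-1)}\binom{n-2}{k-2}(n-1)=\frac{2k-1}{k(k-1)}\binom{n-2}{k-2}(n-1),
\]
which is $C_k$ with the $\beta^{k-1}$ restored. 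This closes the induction.

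The main obstacle is bookkeeping rather than any real difficulty. Specifically, the factor $(n-k+1)$ must be absorbed correctly: it pairs with $\binom{n-2}{k-3}$ to give $(k-2)\binom{n-2}{k-2}$, and that $(k-2)$ then cancels against the denominator $(k-1)(k-2)$ of $C_{k-1}$.
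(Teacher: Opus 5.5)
Your proof is correct and follows the paper's argument. Both use induction on $k$ with the identities $\binom{n-1}{k-1}=\frac{n-1}{k-1}\binom{n-2}{k-2}$ and $(n-k+1)\binom{n-2}{k-3}=(k-2)\binom{n-2}{k-2}$, which reduce the recurrence to $\frac{(2k-3)+2}{k(k-1)}$. Your base case is slightly more complete than the paper's, since you compute $B_2$ from the recurrence itself rather than only evaluating the closed form at $k=2$.
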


\begin{proof}
By induction on $k$.
For $k=2$,
\[
B_2(n)=\frac{3}{2}\cdot\binom{n-2}{0}\cdot\frac{n-1}{2n-1}=\frac{3(n-1)}{2(2n-1)},
\]
matching Theorem~\ref{thm:Delta2_bound}.
Assume the formula holds for $k-1\ge 2$.
Using \eqref{eq:Bk_recurrence} and $\binom{n-1}{k-1}=\frac{n-1}{k-1}\binom{n-2}{k-2}$, we compute
\begin{align*}
B_k(n)
&=
\frac{n-k+1}{k(2n-1)}\cdot
\frac{2(k-1)-1}{(k-1)(k-2)}\binom{n-2}{k-3}\frac{n-1}{(2n-1)^{k-2}}
+\frac{2}{k}\binom{n-1}{k-1}\frac{1}{(2n-1)^{k-1}}\\
&=
\frac{n-1}{k(2n-1)^{k-1}}
\left[
\frac{(n-k+1)(2k-3)}{(k-1)(k-2)}\binom{n-2}{k-3}
+\frac{2}{k-1}\binom{n-2}{k-2}
\right].
\end{align*}
Using $\binom{n-2}{k-3}=\binom{n-2}{k-2}\cdot\frac{k-2}{n-k+1}$, the bracket becomes
\[
\binom{n-2}{k-2}\left[\frac{2k-3}{k-1}+\frac{2}{k-1}\right]
=\binom{n-2}{k-2}\cdot\frac{2k-1}{k-1}.
\]
Substituting yields the desired formula.
\end{proof}

\begin{theorem}[Summation identity]\label{thm:sum_Bk}
For every $n\ge 2$,
\[
\sum_{k=2}^n B_k(n)=\frac{n-1}{n}.
\]
\end{theorem}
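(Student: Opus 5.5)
We use the closed form of Lemma~\ref{lem:closed_form_Bk} and reduce everything to binomial sums. Put $x:=\frac{1}{2n-1}$ and $j:=k-2$, so $j$ runs over $0,\dots,n-2$. Then
\[
B_k(n)=(n-1)\,x\,\frac{2k-1}{k(k-1)}\binom{n-2}{k-2}x^{k-2}.
\]
The natural split is the partial fraction decomposition
\[
\frac{2k-1}{k(k-1)}=\frac{1}{k-1}+\frac{1}{k}.
\]
This breaks the sum into two pieces of the form $\sum_j \binom{n-2}{j}x^j/(j+1)$ and $\sum_j \binom{n-2}{j}x^j/(j+2)$. Both can be evaluated in closed form via integrals of $(1+x)^{n-2}$.

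\textbf{First piece.} Using $\binom{n-2}{j}\frac{1}{j+1}=\frac{1}{n-1}\binom{n-1}{j+1}$, we get
\[
(n-1)x\sum_j\binom{n-2}{j}\frac{x^j}{j+1}=\sum_{i=1}^{n-1}\binom{n-1}{i}x^{i}=(1+x)^{n-1}-1.
\]

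\textbf{Second piece.} Write $\frac{1}{j+2}=\int_0^1 t^{j+1}\,dt$. Then
\[
\sum_j\binom{n-2}{j}\frac{x^j}{j+2}=x^{-2}\int_0^x s(1+s)^{n-2}\,ds.
\]
Integrating by parts, or equivalently writing $s=(1+s)-1$, gives
\[
\int_0^x s(1+s)^{n-2}\,ds=\frac{(1+x)^n-1}{n}-\frac{(1+x)^{n-1}-1}{n-1}.
\]
Multiplying by $(n-1)x$, the second piece equals
\[
\frac{1}{x}\left[\frac{(n-1)\big((1+x)^n-1\big)}{n}-\big((1+x)^{n-1}-1\big)\right].
\]

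\textbf{Specialization.} Now set $x=1/(2n-1)$, so $1+x=\frac{2n}{2n-1}$ and $1/x=2n-1$. Write $A:=(1+x)^{n-1}$; then $(1+x)^n=\frac{2n}{2n-1}A$. Substituting into the second piece:
\[
(2n-1)\left[\frac{n-1}{n}\cdot\frac{2n}{2n-1}A-\frac{n-1}{n}-A+1\right]
=2(n-1)A-\frac{(n-1)(2n-1)}{n}-(2n-1)A+(2n-1).
\]
The $A$-coefficient is $2n-2-2n+1=-1$. The constant is $(2n-1)\bigl(1-\frac{n-1}{n}\bigr)=\frac{2n-1}{n}$. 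So the second piece equals $-A+\frac{2n-1}{n}$.

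\textbf{Conclusion.} Adding the first piece $A-1$, the non-elementary terms $A=(1+x)^{n-1}$ cancel, and the total is
\[
\frac{2n-1}{n}-1=\frac{n-1}{n},
\]
as claimed.

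The main obstacle is purely bookkeeping: the second (``$1/k$'') piece must be evaluated correctly, and the exact value $x=1/(2n-1)$ must be substituted carefully so that the powers $(1+x)^{n-1}$ and $(1+x)^{n}$ cancel against the first piece. As a sanity check, at $n=2$ we have $B_2=\frac{3\cdot1}{2\cdot3}=\frac12$, which matches $\frac{n-1}{n}=\frac12$.
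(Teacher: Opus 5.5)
Your proof is correct. It shares the paper's overall strategy: the closed form from Lemma~\ref{lem:closed_form_Bk}, binomial generating functions, then specialization at $\frac{1}{2n-1}$. Where it differs is the decomposition used to evaluate the sum.

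The paper never splits $\frac{2k-1}{k(k-1)}$. It absorbs the full denominator via $\frac{1}{(j+1)(j+2)}\binom{n-2}{j}=\frac{1}{n(n-1)}\binom{n}{j+2}$, which collapses everything into the single sum $\frac{t}{n}\sum_{k=2}^{n}\binom{n}{k}(2k-1)t^{k-2}$. It then handles the linear weight $2k-1$ by differentiating $(1+t)^n$ and removing the $k=0,1$ terms.

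You instead use $\frac{2k-1}{k(k-1)}=\frac{1}{k-1}+\frac{1}{k}$. You handle the first part by a one-step absorption and the second by the integral $\int_0^x s(1+s)^{n-2}\,ds$.

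Before specialization, the two routes give the same identity, $\sum_{k=2}^n B_k(n)=2(1+x)^{n-1}-1-\frac{(1+x)^n-1}{nx}$; your two pieces add up to exactly this. The paper's route is purely algebraic and yields one compact formula. Yours costs an integral but makes the cancellation transparent: at $x=\frac{1}{2n-1}$ the first piece contributes exactly $+(1+x)^{n-1}$ and the second exactly $-(1+x)^{n-1}$.
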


\begin{proof}
Set $t:=\frac{1}{2n-1}$.
Using Lemma~\ref{lem:closed_form_Bk} and the change of variables $j=k-2$, we obtain
\begin{align*}
\sum_{k=2}^n B_k(n)
&=
\sum_{j=0}^{n-2}
\frac{2j+3}{(j+1)(j+2)}\binom{n-2}{j}(n-1)t^{j+1}\\
&=\frac{t}{n}
\sum_{k=2}^{n}\binom{n}{k}(2k-1)t^{k-2},
\end{align*}
where we used the identity
$\frac{1}{(j+1)(j+2)}\binom{n-2}{j}=\frac{1}{n(n-1)}\binom{n}{j+2}$.

Let $A(t):=\sum_{k=0}^n\binom{n}{k}t^k=(1+t)^n$.
Then $A'(t)=\sum_{k=1}^n k\binom{n}{k}t^{k-1}=n(1+t)^{n-1}$.
A short computation gives
\[
\sum_{k=0}^n\binom{n}{k}(2k-1)t^{k-2}
=\frac{2n}{t}(1+t)^{n-1}-\frac{1}{t^2}(1+t)^n.
\]
Subtracting the $k=0$ and $k=1$ terms (equal to $-t^{-2}$ and $nt^{-1}$) yields
\[
\sum_{k=2}^n\binom{n}{k}(2k-1)t^{k-2}
=\frac{2n}{t}(1+t)^{n-1}-\frac{1}{t^2}(1+t)^n+\frac{1}{t^2}-\frac{n}{t}.
\]
Multiplying by $t/n$ gives
\[
\sum_{k=2}^n B_k(n)=2(1+t)^{n-1}-1-\frac{(1+t)^n-1}{nt}.
\]
Finally, with $t=\frac{1}{2n-1}$ we have $1+t=\frac{2n}{2n-1}$ and
\[
\frac{(1+t)^n-1}{nt}
=\frac{2n-1}{n}\left(\left(\frac{2n}{2n-1}\right)^n-1\right)
=2\left(\frac{2n}{2n-1}\right)^{n-1}-\frac{2n-1}{n}.
\]
Substituting cancels the $(1+t)^{n-1}$ terms and yields
\[
\sum_{k=2}^n B_k(n)=-1+\frac{2n-1}{n}=\frac{n-1}{n}.
\]
\end{proof}

\begin{theorem}\label{thm:SMAL}
For every $\pvec,\qvec\in[0,\lam_n]^n$,
\[
\sum_{k=2}^n\Delta_k(\pvec,\qvec)\ \le\ \frac{n-1}{n}\,\Delta_1(\pvec,\qvec).
\]
\end{theorem}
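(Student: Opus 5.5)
This follows by summing the slice bounds already established. By Corollary~\ref{cor:Delta_k_bound}, for every $k\in\{2,\dots,n\}$ and every $\pvec,\qvec\in[0,\lam_n]^n$ we have $\Delta_k(\pvec,\qvec)\le B_k(n)\,\Delta_1(\pvec,\qvec)$. I would sum these inequalities over $k=2,\dots,n$ to get
\[
\sum_{k=2}^n\Delta_k(\pvec,\qvec)\le\Big(\sum_{k=2}^n B_k(n)\Big)\Delta_1(\pvec,\qvec),
\]
and then invoke Theorem~\ref{thm:sum_Bk} to replace the bracketed sum by $\frac{n-1}{n}$.

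The degenerate case $\Delta_1=0$ needs no separate treatment. Lemma~\ref{lem:l1_vs_Delta1} then forces $\pvec=\qvec$, so both sides vanish. This case is in any event already absorbed into Corollary~\ref{cor:Delta_k_bound}.

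There is no real obstacle at this stage. All the work lies upstream: the recursion of Lemma~\ref{lem:recursive}, whose base case is Theorem~\ref{thm:Delta2_bound}; the closed form of Lemma~\ref{lem:closed_form_Bk}; and the telescoping generating-function identity of Theorem~\ref{thm:sum_Bk}. The only point to check is consistency of the base case. The recurrence \eqref{eq:Bk_recurrence} at $k=2$ gives $B_2(n)=\frac{n-1}{2(2n-1)}+\frac{2(n-1)}{2n-1}\cdot\frac12$, which equals $\frac{3(n-1)}{2(2n-1)}$. This agrees with Theorem~\ref{thm:Delta2_bound}, so the corollary's bound holds from $k=2$ onward.

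As a remark, combining this with Theorem~\ref{thm:Delta0_bound} and \eqref{eq:tv-delta} gives the upper bound in Theorem~\ref{thm:intro_small_tv}:
\[
2\TV\le \Delta_0+\Delta_1+\sum_{k\ge2}\Delta_k\le\Big(\tfrac{2n-1}{n}+1+\tfrac{n-1}{n}\Big)\Delta_1=\tfrac{4n-2}{n}\,\Delta_1,
\]
that is, $\TV\le(2-\tfrac1n)\Delta_1$. The lower bound $\TV\ge\frac12\Delta_1$ is immediate from \eqref{eq:tv-delta}.
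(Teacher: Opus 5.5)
Your proof is correct and is the paper's argument: you sum the bounds $\Delta_k\le B_k(n)\,\Delta_1$ from Corollary~\ref{cor:Delta_k_bound} over $k=2,\dots,n$, apply Theorem~\ref{thm:sum_Bk}, and the case $\Delta_1=0$ reduces to $\pvec=\qvec$ via Lemma~\ref{lem:l1_vs_Delta1}. One small correction: the recursion behind the corollary starts from $B_1(n)=1$ through Lemma~\ref{lem:recursive} at $k=2$, not from Theorem~\ref{thm:Delta2_bound}, so your agreement check at $k=2$ is a consistency observation rather than a needed step.
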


\begin{proof}
If $\Delta_1=0$ then $\pvec=\qvec$ by Lemma~\ref{lem:l1_vs_Delta1}, so every $\Delta_k=0$.
Assume $\Delta_1>0$.
By Corollary~\ref{cor:Delta_k_bound},
$\Delta_k/\Delta_1\le B_k(n)$ for $k=2,\dots,n$.
Summing over $k$ and using Theorem~\ref{thm:sum_Bk} gives
\[
\sum_{k=2}^n\frac{\Delta_k}{\Delta_1}\le \sum_{k=2}^n B_k(n)=\frac{n-1}{n}.
\]
Multiplying by $\Delta_1$ completes the proof.
\end{proof}

Combining Theorem~\ref{thm:Delta0_bound} and Theorem~\ref{thm:SMAL} gives
\[
\sum_{k=0}^n\Delta_k
\le \left(1+\frac{2n-1}{n}+\frac{n-1}{n}\right)\Delta_1
=\left(4-\frac{2}{n}\right)\Delta_1,
\]
which immediately implies Theorem~\ref{thm:intro_small_tv}.

\appendix

\section{An $\ell_2$ bound for quasi-symmetric pairs}
\label{sec:appendix_quasisym}

It is shown in
\cite[Theorem 1.2]{kon25tens} that
for \emph{symmetric} pairs satisfying $\qvec=\onevec-\pvec$,
we have
$\TV(\Ber(\pvec),\Ber(\qvec))\le\norm{\pvec-\qvec}_2$.
Here we record a simple extension to a larger class of \emph{quasi-symmetric} pairs, with an additional factor $\sqrt{2}$.

\begin{definition}[Quasi-symmetric pairs]
A pair $(u,v)\in[0,1]^2$ is \emph{quasi-symmetric} if either $u\le \tfrac12\le v$ or $v\le \tfrac12\le u$.
Vectors $\pvec,\qvec\in[0,1]^n$ are quasi-symmetric if each coordinate pair $(p_i,q_i)$ is quasi-symmetric.
\end{definition}

\begin{theorem}\label{thm:sqrt2}
If $\pvec,\qvec\in[0,1]^n$ are quasi-symmetric, then
\[
\TV(\Ber(\pvec),\Ber(\qvec))\ \le\ \sqrt{2}\,\|\pvec-\qvec\|_2.
\]
\end{theorem}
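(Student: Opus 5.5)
Our plan is to reduce to the symmetric case \cite[Theorem 1.2]{kon25tens} by a coordinatewise "fold" that lets us compare the quasi-symmetric pair with a symmetric one, and to pay the factor $\sqrt2$ for the comparison. The tool for the comparison is the triangle inequality for TV together with the subadditivity-type bound $\TV(\Ber(\pvec),\Ber(\qvec))\le \TV(\Ber(\pvec),\Ber(\vec r))+\TV(\Ber(\vec r),\Ber(\qvec))$ for an intermediate vector $\vec r$.

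Fix a coordinate $i$ and suppose without loss of generality that $p_i\le\tfrac12\le q_i$ (otherwise swap roles within that coordinate; TV is invariant under flipping the bit in coordinate $i$ for both measures, which maps $(p_i,q_i)$ to $(1-p_i,1-q_i)$, so we may in fact normalize all coordinates to $p_i\le \frac12\le q_i$). Write $a_i:=\tfrac12-p_i\ge0$ and $b_i:=q_i-\tfrac12\ge0$, so $|p_i-q_i|=a_i+b_i$. Define the intermediate vector $\vec r$ by $r_i:=\tfrac12+a_i=1-p_i$. Then $(\pvec,\vec r)$ is exactly symmetric, $\vec r=\onevec-\pvec$, so the symmetric theorem gives
\[
\TV(\Ber(\pvec),\Ber(\vec r))\le \|\pvec-\vec r\|_2=\Big(\sum_i 4a_i^2\Big)^{1/2}.
\]
For the second leg, $\vec r$ and $\qvec$ both lie in $[\tfrac12,1]^n$ with $|r_i-q_i|=|a_i-b_i|$. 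Here I would rather not use a crude bound, so instead I would choose the intermediate point more cleverly: set $m_i:=\min(a_i,b_i)$ and take the symmetric pair $(\tfrac12-m_i,\tfrac12+m_i)$ as the anchor, moving only the coordinate with the larger excursion. Concretely, the main step is to show that one can pass from a symmetric pair to the quasi-symmetric one by moving only one endpoint per coordinate \emph{away} from $\tfrac12$, and that such one-sided moves cost in $\ell_2$ at most the excess $|a_i-b_i|$, again via a symmetric comparison.

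The cleanest route I would try first: let $\vec s:=\onevec-\qvec$, so $(\vec s,\qvec)$ is symmetric and $\TV(\Ber(\vec s),\Ber(\qvec))\le \|\onevec-2\qvec\|_2=(\sum 4b_i^2)^{1/2}$; similarly $\TV(\Ber(\pvec),\Ber(\onevec-\pvec))\le(\sum4a_i^2)^{1/2}$. Combining through the midpoint measure $\Ber(\tfrac12\onevec)$, which is symmetric with both (e.g. the pair $(\pvec,\tfrac12\onevec)$ is a monotone sub-path of $(\pvec,\onevec-\pvec)$), the key claim is the monotonicity $\TV(\Ber(\pvec),\Ber(\tfrac12\onevec))\le\TV(\Ber(\pvec),\Ber(\onevec-\pvec))$, which holds since $\Ber(\tfrac12\onevec)$ is the mixture $\tfrac12\Ber(\pvec)\ast\dots$, more precisely $\tfrac12\onevec$ lies on the segment and TV to a fixed measure is convex along the product-coordinate garbling channel that maps $\Ber(1-p_i)$ to $\Ber(1/2)$ while fixing $\Ber(p_i)$? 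That channel does not fix $\Ber(p_i)$, so instead I would use the data-processing/convexity argument: $\Ber(\tfrac12)=\tfrac12\Ber(p_i)+\tfrac12\Ber(1-p_i)$ coordinatewise, and write $\Ber(\tfrac12\onevec)$ as an average over $2^n$ mixed products, each closer to $\Ber(\pvec)$ than $\Ber(\onevec-\pvec)$ by applying the symmetric theorem on the sub-coordinates flipped. This gives $\TV(\Ber(\pvec),\Ber(\tfrac12\onevec))\le 2\|a\|_2$ and likewise $\le 2\|b\|_2$ on the other side; the triangle inequality gives $2(\|a\|_2+\|b\|_2)\le 2\sqrt2\,(\|a\|_2^2+\|b\|_2^2)^{1/2}\le 2\sqrt2\cdot\tfrac{1}{\sqrt2}\|a+b\|_2\cdot$—and here the constant bookkeeping is the main obstacle: since $\|a\|_2^2+\|b\|_2^2\le\|a+b\|_2^2$ for nonnegative vectors, we get $\|a\|_2+\|b\|_2\le\sqrt2\|a+b\|_2=\sqrt2\|\pvec-\qvec\|_2$, so I must arrange each half-leg to cost $\|a\|_2$ rather than $2\|a\|_2$, i.e. use the sharper fact that the symmetric bound applied to the pair $(\pvec,\onevec-\pvec)$ averaged over flips yields the half-distance. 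Verifying that averaging step is the crux.
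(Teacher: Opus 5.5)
There is a genuine gap, and it is exactly the step you call the crux. For the chain through $\Ber(\tfrac12\onevec)$ to close at $\sqrt2$, each leg must cost at most $\norm{a}_2$ (resp.\ $\norm{b}_2$). When $a,b$ have disjoint supports and equal norms, $\norm{a}_2+\norm{b}_2=\sqrt2\,\norm{a+b}_2$ exactly, so any per-leg constant $c>1$ ends at $c\sqrt2$. But the bound $\TV(\Ber(\pvec),\Ber(\tfrac12\onevec))\le\norm{\pvec-\tfrac12\onevec}_2$ is false. For $\pvec=(0,0)$ the left side is $1-\tfrac14=\tfrac34$, while the right side is $\tfrac{\sqrt2}{2}<\tfrac34$. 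Worse, the chain itself overshoots even with exact leg values. For $\pvec=(0,0,\tfrac12,\tfrac12)$ and $\qvec=(\tfrac12,\tfrac12,1,1)$ the two legs are $\tfrac34+\tfrac34=\tfrac32$, whereas $\sqrt2\,\norm{\pvec-\qvec}_2=\sqrt2$. So no sharper per-leg lemma can make this route work on its own. Your earlier variants, via $\onevec-\pvec$ or via one-sided moves ``costing $|a_i-b_i|$'', leave a leg whose endpoints are not symmetric about $\tfrac12$. The symmetric theorem does not apply to that leg, and you give no bound for it.

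Your flip-averaging, carried out correctly, gives the following. Write $\Ber(\tfrac12\onevec)=2^{-n}\sum_{F\subseteq[n]}\Ber(\pvec^{(F)})$, where $\pvec^{(F)}$ replaces $p_i$ by $1-p_i$ for $i\in F$. Convexity and the symmetric theorem on the coordinates in $F$, followed by Jensen, give $\TV(\Ber(\pvec),\Ber(\tfrac12\onevec))\le 2^{-n}\sum_F 2\norm{a_F}_2\le 2\big(\norm{a}_2^2/2\big)^{1/2}=\sqrt2\,\norm{a}_2$. Hence overall $\TV\le\sqrt2(\norm{a}_2+\norm{b}_2)\le 2\norm{\pvec-\qvec}_2$. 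That is a valid result, but with constant $2$.

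The missing idea is to drop the triangle inequality and work with a quantity that tensorizes exactly. The paper uses $\TV\le\sqrt{1-\prod_i b_i^2}$, where $b_i=\sqrt{p_iq_i}+\sqrt{(1-p_i)(1-q_i)}$ are the Bhattacharyya coefficients. It then applies $1-\prod_i b_i^2\le\sum_i(1-b_i^2)$, reducing everything to the one-dimensional inequality $1-b(p,q)^2\le 2(p-q)^2$ for $p\ge\tfrac12\ge q$. That inequality follows from the identity $1-b^2=(p-q)^2/\big(\sqrt{p(1-q)}+\sqrt{q(1-p)}\big)^2$. The denominator is the squared inner product of the unit vectors $(\sqrt p,\sqrt{1-p})$ and $(\sqrt{1-q},\sqrt q)$, both at angles in $[0,\pi/4]$, so it is at least $\tfrac12$. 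The factor $\sqrt2$ is paid only in this one-dimensional step, and the symmetric theorem is not used at all.
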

\noindent
Remark: the choice
$\pvec=(1,1)$,
$\qvec=\bigl(\tfrac12,\tfrac12\bigr)$ shows that the optimal
constant must be at least $3/\sqrt{8}$.

\begin{lemma}[\cite{tsybakov09}, Disp. (2.20)]
\label{lem:tv-bc}
For any probability measures $P,Q$ on a finite set $\Omega$,
\[
\TV(P,Q) \le 
\sqrt{1-\left(\sum_{\omega\in\Omega}\sqrt{P(\omega)\,Q(\omega)}\right)^2}.
\]
\end{lemma}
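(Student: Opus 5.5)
We would prove Lemma~\ref{lem:tv-bc} with the classical Le Cam argument: factor each pointwise difference as a difference of squares and then apply Cauchy--Schwarz. Write $\rho:=\sum_{\omega\in\Omega}\sqrt{P(\omega)Q(\omega)}$ for the Bhattacharyya coefficient. Note that $0\le\rho\le 1$, by Cauchy--Schwarz and the fact that $P$ and $Q$ each sum to $1$. The goal is the bound $\TV(P,Q)\le\sqrt{1-\rho^2}$.

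First, for every $\omega$ we factor
\[
\abs{P(\omega)-Q(\omega)}=\abs{\sqrt{P(\omega)}-\sqrt{Q(\omega)}}\,\Big(\sqrt{P(\omega)}+\sqrt{Q(\omega)}\Big).
\]
Summing over $\omega$ and applying Cauchy--Schwarz to the finite sum gives
\[
2\TV(P,Q)\le\Big(\sum_{\omega}\big(\sqrt{P(\omega)}-\sqrt{Q(\omega)}\big)^2\Big)^{1/2}\Big(\sum_{\omega}\big(\sqrt{P(\omega)}+\sqrt{Q(\omega)}\big)^2\Big)^{1/2}.
\]

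Second, we expand both sums of squares using $\sum_\omega P(\omega)=\sum_\omega Q(\omega)=1$. This gives $\sum_\omega(\sqrt P-\sqrt Q)^2=2-2\rho$ and $\sum_\omega(\sqrt P+\sqrt Q)^2=2+2\rho$. Therefore
\[
2\TV(P,Q)\le\sqrt{(2-2\rho)(2+2\rho)}=2\sqrt{1-\rho^2},
\]
and dividing by $2$ yields the claim.

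No step here is a real obstacle; the only thing to check is the elementary algebra in the expansion of the two squared sums. Since $\Omega$ is finite, no measurability or convergence issues arise. Atoms where $P(\omega)=0$ or $Q(\omega)=0$ need no separate treatment, because the factorization still holds there.
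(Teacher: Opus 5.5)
Your proof is correct and complete: factoring $|P(\omega)-Q(\omega)|$ as a difference of square roots, applying Cauchy--Schwarz, and using $\sum_\omega(\sqrt{P(\omega)}\mp\sqrt{Q(\omega)})^2=2\mp2\rho$ gives exactly $\TV(P,Q)\le\sqrt{1-\rho^2}$. The paper gives no proof and only cites Tsybakov, and your argument is the standard Le Cam argument given there (stated in Hellinger form as $\TV\le H\sqrt{1-H^2/4}$ with $H^2=2-2\rho$).
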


\begin{lemma}[\cite{tsybakov09}, p. 83]
\label{lem:bc-tensor}
Let $P=\Ber(\pvec)$ and $Q=\Ber(\qvec)$ on $\{0,1\}^n$. Then
\[
\sum_{x\in\set{0,1}^n}\sqrt{P(x)Q(x)}
=\prod_{i=1}^n\left(\sqrt{p_iq_i}+\sqrt{(1-p_i)(1-q_i)}\right).
\]
\end{lemma}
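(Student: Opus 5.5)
The plan is to use the fact that both measures are product measures, so the pointwise geometric mean $\sqrt{P(x)Q(x)}$ factorizes coordinatewise, and then to interchange the sum over the cube with the product by distributivity. For $x=(x_1,\dots,x_n)\in\{0,1\}^n$, write $P(x)=\prod_{i=1}^n p_i^{x_i}(1-p_i)^{1-x_i}$ and similarly $Q(x)=\prod_{i=1}^n q_i^{x_i}(1-q_i)^{1-x_i}$. This is just $P_S(\pvec)$ and $P_S(\qvec)$ from \eqref{eq:PS_def} with $S=\{i:x_i=1\}$. Since all factors are nonnegative, the square root of a product is the product of square roots, so
\[
\sqrt{P(x)Q(x)}=\prod_{i=1}^n \phi_i(x_i),\qquad
\phi_i(1):=\sqrt{p_iq_i},\quad \phi_i(0):=\sqrt{(1-p_i)(1-q_i)}.
\]

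Next I sum over $\{0,1\}^n=\{0,1\}\times\cdots\times\{0,1\}$ and use the elementary identity
\[
\sum_{x_1,\dots,x_n}\prod_{i}\phi_i(x_i)=\prod_i\sum_{x_i\in\{0,1\}}\phi_i(x_i).
\]
This is expansion of a product of sums (distributivity). If one wants full formality, it follows by induction on $n$: the case $n=1$ is trivial, and in the inductive step one sums out the last coordinate first and factors out $\phi_n(0)+\phi_n(1)$. Each inner sum equals $\sqrt{p_iq_i}+\sqrt{(1-p_i)(1-q_i)}$, which gives the claimed formula.

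There is no genuine obstacle here. The only point requiring a moment's care is the use of $\sqrt{ab}=\sqrt a\sqrt b$, which is justified because all probabilities are nonnegative. Degenerate coordinates with $p_i,q_i\in\{0,1\}$ need no separate treatment, since the factorization holds pointwise.
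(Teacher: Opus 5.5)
Your proof is correct: the pointwise factorization $\sqrt{P(x)Q(x)}=\prod_i\phi_i(x_i)$ followed by distributivity over $\{0,1\}^n$ is the standard tensorization argument for the Bhattacharyya (Hellinger) affinity. The paper does not prove this lemma itself; it only cites Tsybakov, where the identity comes from the same product-measure factorization.
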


\begin{lemma}
\label{lem:1d}
Let $p,q\in[0,1]$ satisfy $p\ge \tfrac12\ge q$ and define
\[
b(p,q):=\sqrt{pq}+\sqrt{(1-p)(1-q)}.
\]
Then
\[
1-b(p,q)^2\ \le\ 2(p-q)^2.
\]
\end{lemma}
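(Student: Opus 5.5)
The plan is to reduce the inequality to an exact algebraic identity for $1-b(p,q)^2$, then finish with an elementary lower bound that uses $p\ge\tfrac12\ge q$.

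\emph{Step 1: write $1-b^2$ as a perfect square.} Expanding,
\[
b(p,q)^2=pq+(1-p)(1-q)+2\sqrt{pq(1-p)(1-q)}.
\]
Since $1-pq-(1-p)(1-q)=p+q-2pq=p(1-q)+q(1-p)$, this gives
\[
1-b(p,q)^2=\Bigl(\sqrt{p(1-q)}-\sqrt{q(1-p)}\Bigr)^2 .
\]
So it suffices to prove
\[
\Bigl|\sqrt{p(1-q)}-\sqrt{q(1-p)}\Bigr|\le \sqrt2\,(p-q).
\]

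\emph{Step 2: rationalize the difference.} Note that $p(1-q)-q(1-p)=p-q$. If both square roots vanish, then $p(1-q)=0$, which is impossible since $p\ge\tfrac12$ and $1-q\ge\tfrac12$. Hence
\[
\sqrt{p(1-q)}-\sqrt{q(1-p)}=\frac{p-q}{\sqrt{p(1-q)}+\sqrt{q(1-p)}} .
\]
The claim therefore reduces to the lower bound
\[
\Bigl(\sqrt{p(1-q)}+\sqrt{q(1-p)}\Bigr)^2\ \ge\ \tfrac12 .
\]

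\emph{Step 3: the lower bound, which is the only step needing the quasi-symmetry hypothesis.} Drop the nonnegative cross term, so that the squared sum is at least $p(1-q)+q(1-p)$. Set $a:=p-\tfrac12\ge0$ and $c:=\tfrac12-q\ge0$. Then
\[
p(1-q)+q(1-p)=\bigl(\tfrac12+a\bigr)\bigl(\tfrac12+c\bigr)+\bigl(\tfrac12-c\bigr)\bigl(\tfrac12-a\bigr)=\tfrac12+2ac\ \ge\ \tfrac12 .
\]
This is exactly where $p\ge\tfrac12\ge q$ is used: it makes $ac\ge0$.

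\emph{Conclusion.} Combining Steps 1--3 gives $1-b(p,q)^2\le 2(p-q)^2$. I expect the main obstacle to be spotting the perfect-square identity in Step 1. After that, the cruder estimate $p(1-q)\ge\tfrac14$ alone gives only the constant $4$, so the substitution in Step 3 is needed to reach the factor $2$.
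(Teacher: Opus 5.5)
Your proof is correct and follows the same route as the paper: the perfect-square identity $1-b(p,q)^2=\bigl(\sqrt{p(1-q)}-\sqrt{q(1-p)}\bigr)^2$, rationalization via $p(1-q)-q(1-p)=p-q$, and a lower bound of $1/\sqrt2$ on $\sqrt{p(1-q)}+\sqrt{q(1-p)}$. The only difference is how you get that last bound. The paper writes this sum as $\cos(\theta-\phi)$, the inner product of the unit vectors $(\sqrt p,\sqrt{1-p})$ and $(\sqrt{1-q},\sqrt q)$, whose angles both lie in $[0,\pi/4]$. You instead drop the cross term and use the identity $p(1-q)+q(1-p)=\tfrac12+2ac$, which loses nothing, since both arguments are tight at $(p,q)=(\tfrac12,0)$ and $(1,\tfrac12)$.
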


\begin{proof}
A direct expansion shows the identity
\begin{equation}
\label{eq:identity}
1-b(p,q)^2=\left(\sqrt{p(1-q)}-\sqrt{q(1-p)}\right)^2.
\end{equation}
Let $A=p(1-q)$ and $B=q(1-p)$. Then $A-B=p-q$. Using
\[
\sqrt{A}-\sqrt{B}=\frac{A-B}{\sqrt{A}+\sqrt{B}},
\]
and \eqref{eq:identity}, we obtain
\begin{equation}
\label{eq:frac}
1-b(p,q)^2
=\frac{(p-q)^2}{\left(\sqrt{p(1-q)}+\sqrt{q(1-p)}\right)^2}.
\end{equation}

It remains to lower bound the denominator.
Define unit vectors in $\mathbb{R}^2$:
\[
\vec u=(\sqrt{p},\sqrt{1-p}),\qquad \vec v=(\sqrt{1-q},\sqrt{q}),
\qquad \|\vec u\|_2=\|\vec v\|_2=1.
\]
Then
\[
\vec u\cdot \vec v=\sqrt{p(1-q)}+\sqrt{q(1-p)}.
\]
Let $\theta$ and $\phi$ be the angles of $\vec u$ and $\vec v$ from the $x$-axis. Since $p\ge 1/2$,
we have $\theta\in[0,\pi/4]$, and since $q\le 1/2$, we have $\phi\in[0,\pi/4]$.
Therefore $|\theta-\phi|\le \pi/4$, so
\[
\vec u\cdot \vec v=\cos(\theta-\phi)\ge \cos(\pi/4)=\frac{1}{\sqrt{2}}.
\]
Substituting into \eqref{eq:frac} yields
\[
1-b(p,q)^2\le \frac{(p-q)^2}{(1/\sqrt2)^2}=2(p-q)^2,
\]
as desired.
\end{proof}

\begin{proof}[Proof of Theorem~\ref{thm:sqrt2}]
Since any pair $p_i,q_i$ may be simultaneously reflected about $1/2$ without affecting the TV,
we may assume, for all $i$,
\[
p_i\ge \tfrac12\ge q_i.
\]
By Lemmas~\ref{lem:tv-bc} and \ref{lem:bc-tensor},
\[
\TV(P,Q)\le \sqrt{1-\prod_{i=1}^n b_i^2},
\qquad
b_i:=\sqrt{p_iq_i}+\sqrt{(1-p_i)(1-q_i)}\in[0,1].
\]
For numbers $y_i\in[0,1]$ one has $1-\prod_i y_i \le \sum_i (1-y_i)$, so with $y_i=b_i^2$,
\[
\TV(P,Q)\le \sqrt{\sum_{i=1}^n (1-b_i^2)}.
\]
By Lemma~\ref{lem:1d} applied coordinatewise (recall $p_i\ge \tfrac12\ge q_i$), we have
\[
1-b_i^2\le 2(p_i-q_i)^2.
\]
Therefore
\[
\TV(P,Q)\le \sqrt{\sum_{i=1}^n 2(p_i-q_i)^2}
=\sqrt{2}\,\|\pvec-\qvec\|_2,
\]
which completes the proof.
\end{proof}

\paragraph{{\bf Acknowledgments}}
This research was supported in part by 
the Israel Science Foundation ISF grant
581/25
and the Binational Science Foundation BSF grant
2024243.


\begin{thebibliography}{99}

\bibitem{DBLP:conf/ijcai/0001GMMPV23}
Arnab Bhattacharyya, Sutanu Gayen, Kuldeep S. Meel, Dimitrios Myrisiotis, A. Pavan, and N. V. Vinodchandran.
\newblock On Approximating Total Variation Distance.
\newblock In \emph{Proceedings of the Thirty-Second International Joint Conference on Artificial Intelligence, {IJCAI} 2023, 19th-25th August 2023, Macao, SAR, China}, pages 3479--3487. ijcai.org, 2023.
\newblock doi: 10.24963/IJCAI.2023/387.

\bibitem{FengApproxTV23}
Weiming Feng, Heng Guo, Mark Jerrum, and Jiaheng Wang.
\newblock A simple polynomial-time approximation algorithm for the total variation distance between two product distributions.
\newblock In \emph{2023 Symposium on Simplicity in Algorithms (SOSA)}, pages 343--347, 2023.
\newblock doi: 10.1137/1.9781611977585.ch30.

\bibitem{Feng24Deterministically}
Weiming Feng, Liqiang Liu, and Tianren Liu.
\newblock On Deterministically Approximating Total Variation Distance.
\newblock In \emph{Proceedings of the 2024 Annual ACM-SIAM Symposium on Discrete Algorithms (SODA)}, pages 1766--1791, 2024.
\newblock doi: 10.1137/1.9781611977912.70.

\bibitem{bhattacharyya2024total}
Arnab Bhattacharyya, Sutanu Gayen, Kuldeep S. Meel, Dimitrios Myrisiotis, A. Pavan, and N. V. Vinodchandran.
\newblock Total Variation Distance Meets Probabilistic Inference.
\newblock In \emph{Forty-first International Conference on Machine Learning}, 2024.

\bibitem{kon25tens}
Aryeh Kontorovich.
\newblock {On the tensorization of the variational distance}.
\newblock \emph{Electronic Communications in Probability} 30: 1--10, 2025.
\newblock doi: 10.1214/25-ECP680.

\bibitem{kontorovich2026tvhomogenization}
Aryeh Kontorovich.
\newblock {TV} homogenization inequalities, preprint.
\newblock arXiv:2601.04079, math.PR, 2026.

\bibitem{KonAv26}
Aryeh Kontorovich and Ariel Avital.
\newblock Total variation over {B}ernoulli products: an {$O(\sqrt{\log n})$} approximation, in preparation.
\newblock 2026.

\bibitem{tsybakov09}
Alexandre B. Tsybakov.
\newblock \emph{Introduction to Nonparametric Estimation}.
\newblock Springer series in statistics. Springer, 2009.
\newblock doi: 10.1007/B13794.

\end{thebibliography}
\end{document}